\newtheorem{theorem}{Theorem}[section]
\newtheorem{definition}{Definition}[section] 
\newtheorem{remark}{Remark}[section] 
\newtheorem{corollary}{Corollary}[section] 
\newtheorem{lemma}{Lemma}[section]
\newcommand{\R}{\mathbb{R}}
\newcommand{\C}{\mathbb{C}}
\newcommand{\grad}{\nabla}
\newcommand{\weakc}{\rightharpoonup}
\begin{document}
\setlength{\parskip}{1mm}
\setlength{\oddsidemargin}{0.1in}
\setlength{\evensidemargin}{0.1in}
\lhead{}
\rhead{}

\begin{center}
{\bf THE FACTORIZATION METHOD FOR A DEFECTIVE REGION IN AN ANISOTROPIC MEDIA}
\end{center}
\vspace{0.05in}
\begin{center}
Fioralba Cakoni\\
Department of Mathematical Sciences\\
University of Delaware Newark\\
Delaware 19716-2553, USA \\ 
E-mail address: cakoni@math.udel.edu\\
\vspace{0.3in}

Isaac Harris \\
Department of Mathematical Sciences\\
University of Delaware Newark\\
Delaware 19716-2553, USA \\
E-mail address: iharris@udel.edu
\end{center}

\begin{abstract}
In this paper we consider the inverse acoustic scattering (in $\R^3$) or electromagnetic scattering (in $\R^2$, for the scalar TE-polarization case) problem of reconstructing possibly multiple defective penetrable regions in a known anisotropic material of compact support. We develop the factorization method for a non-absorbing anisotropic background media containing penetrable defects. In particular, under appropriate assumptions on the anisotropic material  properties of the media we develop a rigorous characterization for the support of the  defective regions from the given far field measurements. Finally we present some numerical examples  in the two dimensional case to demonstrate the feasibility of our reconstruction method including examples for the case when the defects are voids (i.e. subregions with refractive index the same as the background outside the inhomogeneous hosting media).
\end{abstract}

{\bf Keywords}: factorization method, anisotropic materials, non-destructive testing, inverse scattering problem.

\section{Introduction}
Nondestructive testing of  exotic materials using acoustic or electromagnetic waves is an important engineering problem. The inverse problem that we are interested in is to determine the shape and position of defects in a known anisotropic material of compact support. This problem arises for example in nondestructive testing of airplane canopies. Using Newton type optimization techniques it is possible to  reconstruct the refractive index of the defect (see e.g. \cite{coltonkress}, \cite{hohage} and the references therein for inverse medium problem in a homogeneous background). However,  such methods require good a priori information about  the type and the number of components of possible defects, and they are problematic for anisotropic media due to lack of uniqueness. Alternative methods for solving  the inhomogeneous media inverse problem that come under the general title of qualitative methods, such as sampling methods,  practically do not require any a priori information but   as oppose to nonlinear optimization techniques only seek limited information about the defects.  It has been shown in \cite{mypaper1} that, when the defect is a void(s) (i.e. subregions with refractive index the same as the background outside the inhomogeneous hosting media) one can qualitatively obtain information about the size of the void(s) from far field data using the  corresponding transmission eigenvalues (see Definition \ref{trig} in this paper). In this paper we develop a factorization method (see \cite{armin}, \cite{kirschbook} and the references therein),  to reconstruct the support of the defective region. A similar problem was considered in \cite{fmconstant} where it is assumed that the background media is piecewise homogeneous with a sound-soft obstacle embedded in it. Also in \cite{fmiso} the factorization method was developed for non-absorbing inhomogeneous media embedded in a piecewise homogeneous background. We remark that other qualitative methods such as the linear sampling method and reciprocity gap functional have been developed for inhomogeneous (possibly anisotropic) background \cite{fares}, \cite{haddar}, \cite{coyle}.  We remark that the factorization method is the most rigorously justified  technique within the class of qualitative methods in inverse scattering. 

Motivated by nondestructive testing of anisotropic material, we develop the factorization method for  determining the support of a penetrable (possibly anisotropic) defective region embedded in a known anisotropic media of compact support  sitting in a homogeneous background.  The factorization method gives a rigorous characterization of the support of the defect in terms of the far field operator provided that the background is known hence providing also a uniqueness result. Note that for anisotropic defects the unique determination of the support is the best we can hope, since in general it is well known that the  matrix-valued refractive index is not uniquely determined. We note that, the factorization method for this configuration involves the computation  of the far field pattern of Green's function for the inhomogeneous background media. However for the case of anisotropic homogeneous media we extend the result in \cite{fmconstant} and provide a simple formula to compute the far field pattern of the background Green's function in terms of the total field due to the background. As a particular application of this study, we consider the determination of the  support of voids inside  a known anisotropic media.

The paper is structured as follows. After formulating the scattering problem in the next section,  we construct a factorization of  the far field field operator which is defined in terms of the measured far field data and the far field pattern of the scattered field due to the background. Then in Section 4 we use the main factorization theorems in  \cite{kirschbook}  and  \cite{armin} to  derive an indicator function for  the support of the defect $D_0$ embedded in a known anisotropic media with support $D$ (see Figure  \ref{geoexample1}) under reasonable assumptions on the constitutive parameters of the background and the defect. In the last section we present some numerical examples to show the viability of our reconstruction method. We remark that for standard asymptotic expressions in scattering theory used here, we refer the reader to  \cite{p1} for the case of  ${\mathbb R}^2$ and  to \cite{coltonkress} for the case of  ${\mathbb R}^3$.

\section{Formulation of the Problem}\label{problem}
We start by introducing  the scattering problem for a ``healthy" and  ``faulty"  material in ${\mathbb R}^m$, $m=2,3$. 
 
 \begin{figure}[H]
\centering
\includegraphics[scale=0.25]{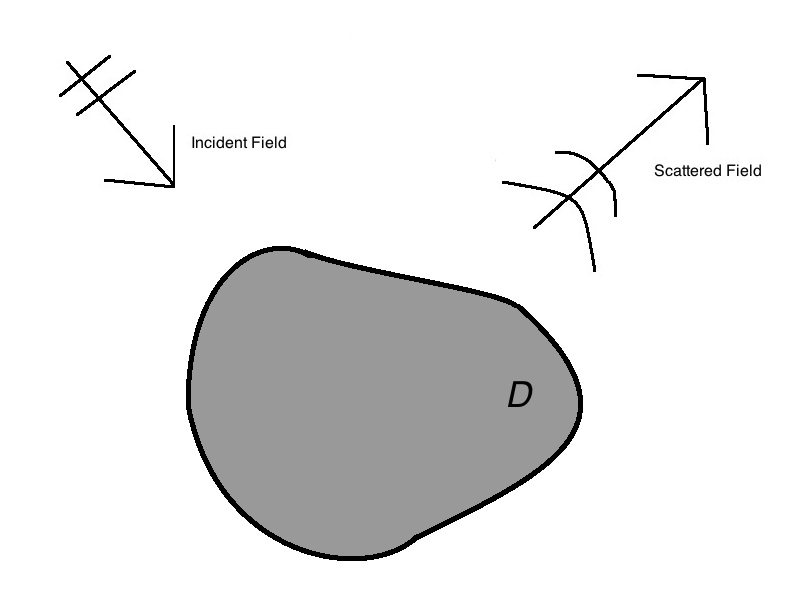}\includegraphics[scale=0.25]{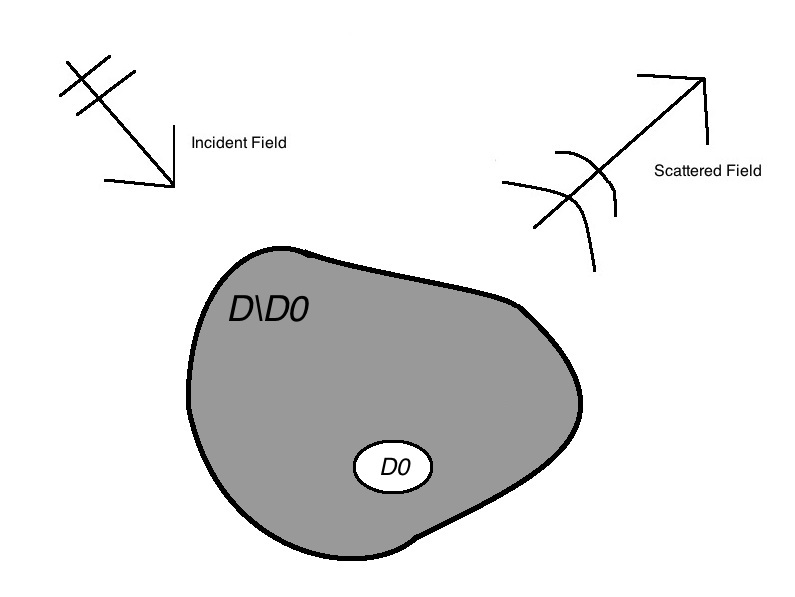}\\
\caption{Example geometry of the scattering of a medium without and with a defective region. }
\label{geoexample1}
\end{figure}

 To this end, let $D \subset \R^m$ be a bounded simply connected open set having piece-wise smooth boundary $\partial D$ with $\nu$ being the unit outward normal to the boundary. We assume that the constitutive parameters of the media in $D$ are represented by a real-valued  symmetric matrix $\tilde A\in  C^{1} \left( D, \R^{m \times m} \right)$ and a real valued function $\tilde n \in C^1(D)$  such that  $\overline{\xi}\cdot \tilde A(x) \xi\geq a_{min} |\xi|^2>0$ and $\tilde n(x)\geq n_{min}>0$ for almost all $x\in D$ and all $\xi\in {\mathbb C}^m$. Outside $D$ the background media is homogeneous isotropic with refractive index scaled to one. We denote by $A$ and $n$ the constitutive parameters of the anisotropic background ${\mathbb R}^m$ given by
 $$A(x):=\left\{\begin{array}{rrcll} \tilde A(x) & x\in D\;\;\; \\
 I \;\;\; & \;\;\; \;\; x\in{\mathbb R}^m\setminus \overline{D}
 \end{array}\right.  \qquad \qquad  n(x):=\left\{\begin{array}{rrcll} \tilde n(x) & \;\;\; x\in D\;\;\; \\
 1 \;\;\; & \;\;\;\; x\in{\mathbb R}^m\setminus \overline{D}
 \end{array}\right.$$
 where $I$ is the identity matrix. Note that the support of $A-I$ and $n-1$ is $\overline{D}$. Now the scattering of an incident plane wave $e^{ikx \cdot d}$, where $d$ is a unitary vector, by the ``healthy" anisotropic material  (i.e. without defects) is mathematically formulated as: find $u_b \in H^1_{loc}(\R^m)$ with $u_b =u^s_b + e^{ikx \cdot d}$  such that
\begin{eqnarray}
\grad \cdot A(x) \grad u_b +k^2 n(x) u_b=0 \,  &\textrm{ in }& \,  \R^m  \label{bgp1}\\
\lim\limits_{r \rightarrow \infty} r^{\frac{m-1}{2}} \left( \frac{\partial u^s_b}{\partial r} -iku^s_b \right)=0 \label{bgp3}
\end{eqnarray}
where the radiation condition (\ref{bgp3}) is satisfied uniformly with respect to $\hat x=x/|x|$.  We recall that  (\ref{bgp1}) implies that across the interface $\partial D$ we have
$$ \frac{\partial u_b^-}{\partial \nu_{A}} =\frac{\partial u_b^+}{\partial \nu}\qquad \mbox{on}\;\; \partial D$$
where  the superscripts $+$ and $-$ for a generic function indicates the trace on the boundary taken from the exterior or interior of its surrounding domain, respectively. 
Here $u_b$ is the total field  in  the background (including the homogeneous part and the anisotropic media of compact support $D$)  and $u^s_b$ is the scattered field  due to the anisotropic region $D$ of the background. It is known  that the scattered field $u_b^s(\cdot,d)$ which depends on the incident direction $d$, has the following asymptotic expansion 
$$u^s_b(x,d)=\frac{e^{ik|x|}}{|x|^{\frac{m-1}{2}}} \left\{u_b^{\infty}(\hat{x}, d ) + \mathcal{O} \left( \frac{1}{|x|}\right) \right\}\; \textrm{  as  } \;  |x| \to \infty $$
where $\hat x:=x/|x|$ and $u_b^{\infty}(\hat x, d) $, which depends on the incident direction  $d$ and observation direction $\hat{x}$, is the corresponding far field pattern.  The far field pattern is given by the integral representation
\begin{eqnarray}
u_b^\infty(\hat{x}, d )= \gamma_m \int\limits_{\partial \Omega} \left(u_b^s(y,d) \frac{\partial e^{-ik \hat{x}\cdot y}}{\partial \nu_y}  - \frac{\partial u_b^s(y,d) }{\partial \nu_y}e^{-ik \hat{x}\cdot y}\right) \, ds_y \label{ffpdef}
\end{eqnarray}
where the constant $\gamma_m$, $m=2,3$ is given by $\gamma_2= \frac{e^{i \pi/4}}{\sqrt{8 \pi k}} $ and $\gamma_3= \frac{1}{4 \pi }$ and the region $\Omega$ is any subset of $\R^m$ such that $D \subseteq \Omega$.  We now define the far field operator for the background scattering problem as $F_b: L^2(\mathbb{S}) \longmapsto  L^2(\mathbb{S})$  
$$(F_b g)(\hat{x}):=\int\limits_{\mathbb{S}} u_b^{\infty}(\hat{x}, d ) g(d) \, ds(d), \, \, \;\, \,  g \in L^2(\mathbb{S}) $$
where $ \mathbb{S}=\{ {x} \in \R^m\,  :  \, |{x}|=1 \}$ is the unit circle or sphere. For later use we introduce the scattering operator associated with this scattering problem, which plays an essential role in our factorization in the follow section. 
\begin{definition} \label{deft} The  scattering operator ${\mathcal S}_b:L^2(\mathbb{S}) \to L^2(\mathbb{S})$  for \eqref{bgp1}-\eqref{bgp3} is defined by 
\begin{eqnarray}
\mathcal{S}_b=I+2ik\gamma_m F_b. \label{sop}
\end{eqnarray}
Since $A$ and $n$ are real valued, the scattering operator is unitary, i.e. $\mathcal{S}_b \mathcal{S}_b^*=\mathcal{S}_b^*\mathcal{S}_b=I$ (see Theorem 7.32 in \cite{p1} in ${\mathbb R}^2$; exactly same argument applies in ${\mathbb R}^3$).
\end{definition} 
Next we assume that inside the anisotropic material $D$ there is a defect  (possibly anisotropic and/or absorbing) occupying the subregion $D_0$ such that $\overline{D}_0 \subset D$ having piecewise smooth boundary $\partial D_0$ (see Figure  \ref{geoexample1}). Note that $D_0$ can be of multiple components  with connected complement. We denote by $\tilde A_0$ and $\tilde n_0$ the material properties of the medium in $D_0$.  We further assume that the symmetric matrix-valued function $\tilde A_0$ is such that $\tilde A_0\in  C^{1} \left( D_0, \C^{m \times m} \right)$, $\xi\cdot  \Re(\tilde A_0(x)) \xi\geq \alpha_0|\xi|^2$, $\xi\cdot  \Im(\tilde A_0(x)) \xi \leq 0$ for all $\xi\in{\mathbb C}^m$ and for all $x\in D_0$, whereas the scalar-valued function $\tilde n_0$ is such that $\tilde n_0 \in C^1(D)$, $\Re(\tilde n_0(x))\geq c_0>0$  and $\Im(\tilde n_0(x))\geq 0$ for  all $x\in D_0$. Let us denote by $A_0$ and $n_0$ the extensions 
 $$A_0(x):=\left\{\begin{array}{rrcll} \tilde A_0(x) & x\in D_0\;\;\; \\
 A \;\;\; & \;\;\; x\in{\mathbb R}^m\setminus \overline{D_0}
 \end{array}\right.  \qquad \qquad  n_0(x):=\left\{\begin{array}{rrcll} \tilde n_0(x) & \;\;\; x\in D_0\;\;\; \\
 n \;\;\; & x\in{\mathbb R}^m\setminus \overline{D_0}
 \end{array}\right. .$$ 
Obviously,  $A_0(x)$ and $n_0(x)$ are such that $A-A_0$ and $n-n_0$ are supported on $\overline {D}_0$.  Notice that a specific case of a defect is a void with  $\tilde A_0=I$ and $\tilde n_0=1$.  The scattering problem for the anisotropic media with the defective region $D_0$ now reads: find $u_0 \in H^1_{loc}(\R^m)$ with $u_0 =u_0^s + e^{ikx \cdot d}$ such that
\begin{eqnarray}
\grad \cdot A_0(x) \grad u_0 +k^2 n_0(x) u_0=0 \,  &\textrm{ in }& \,  \R^m  \label{defectp1}\\
\lim\limits_{r \rightarrow \infty} r^{\frac{m-1}{2}} \left( \frac{\partial u_0^s}{\partial r} -ik u_0^s \right)=0 \label{defectp4}
\end{eqnarray}
where again the radiation condition (\ref{bgp3}) is satisfied uniformly with respect to $\hat x=x/|x|$. Once again we recall that across the interfaces $\partial D$ and $\partial D_1$ we have that
$$\frac{\partial u^-}{\partial \nu_{A}} =\frac{\partial u^+}{\partial \nu} \quad \textrm{ on } \partial D \quad  \quad \frac{\partial u^-}{\partial \nu_{A_0}} =\frac{\partial u^+}{\partial \nu_A} \quad \textrm{ on } \partial D_0.$$ 
Similarly since $u^s_0$ is a radiating solution to the Helmholtz equation in $\R^m \setminus \overline{D}$, we have that its corresponding far field pattern $u_0^{\infty}(\hat{x}, d)$ is given by \eqref{ffpdef} where  $u^s_b$ is replaced with $u^s_0$. The far field operator  $F_0: L^2(\mathbb{S}) \longmapsto  L^2(\mathbb{S})$ for the defective anisotropic media is now defined by 
$$(F_0 g)(\hat{x}):=\int\limits_{\mathbb{S}} u_0^{\infty}(\hat{x}, d ) g(d) \, ds(d) \, \,  \text{ where } \, \,  g(d) \in L^2(\mathbb{S}).$$
The {\it inverse problem } we consider here is to determine the support of $D_0$ from a knowledge of $F_0$, i.e. from a knowledge of the  measured far field pattern $u_0^{\infty}(\hat{x}, d)$ for all $d,\hat x\in{\mathbb S}$, provided that $A$, $n$ and $D$ are known.

One can see that,  if we take the incident field in \eqref{defectp1}-\eqref{defectp4} to be $u_b(\cdot , d)=u^s_b + e^{ikx \cdot d}$ then the resulting scattered field $u^s=u_0^s-u_b^s$ is due to the defect $D_0$. Note  that the scattered field $u^s$ due to the incident field $u_b(\cdot , d)=u^s_b + e^{ikx \cdot d}$ satisfies the source problem 
\begin{eqnarray}
\grad \cdot A_0 \grad u^s +k^2 n_0 u^s=\grad \cdot (A-A_0)  \grad u_b +k^2(n-n_0)u_b \quad   \textrm{ in } \,  \R^m. \label{void}
\end{eqnarray} 
together with the Sommerfeld radiation condition, which coincides with the equation for $u_0^s-u_b^s$ by linearity and (\ref{bgp1}) and (\ref{defectp1}). Therefore the relative far-field operator associated with the scattered field due to the defect is given by
$$(F g)(\hat{x}):=\int\limits_{\mathbb{S}}\big[ u_0^{\infty}(\hat{x}, d )-u_b^{\infty}(\hat{x}, d )\big]  g(d) \, ds(d) \, \,  \text{ where } \, \,  g(d) \in L^2(\mathbb{S}),$$
which is $F=F_0-F_b$. Note that $F_0$ is what we measure and $F_b$ is computable since $A$, $n$ and $D$ are known, hence we can assume that we know $F$.
\begin{remark}
{\em The smoothness of the coefficients $A_0$, $A$, $n$ and $n_0$ in our analysis can be relaxed to e.g. to  Lipshitz continuous or as regular as it is needed to apply unique continuation to the solution of the direct scattering problem.}
\end{remark}
\section{Factorization of the Far Field Operator }
\noindent Our goal in the current section is to construct a factorization of  the relative far field operator $F=F_0-F_b$ in such a way as to use the factorization method in  \cite{armin}, \cite{kirschbook}, in order to develop a range test for the support $D_0$ of the defect in terms of the measured far field operator. To this end motivated by the expression (\ref{void}) for the  scattered field due to the defect, we consider the problem of finding  $u \in H^1_{loc}(\R^m)$ for a given $v \in H^1(D_0)$ such that 
\begin{eqnarray}
&&\grad \cdot A_0 \grad u +k^2 n_0 u=\grad \cdot (A-A_0)  \grad v +k^2(n-n_0)v \quad   \textrm{ in } \,  \R^m \label{sourcep1} \\
&&\hspace{0cm} \lim\limits_{r \rightarrow \infty} r^{\frac{m-1}{2}} \left( \frac{\partial u}{\partial r} -ik u \right)=0. \nonumber
\end{eqnarray}
At this point let us recall  the exterior Dirichlet-to-Neumann map ${\mathbb T}_k: H^{1/2}(\partial B_R) \mapsto H^{-1/2}(\partial B_R)$ given by ${\mathbb T}_k f=\frac{\partial \varphi}{\partial \nu}$ on $\partial B_R$ where
\begin{eqnarray*}
&\Delta \varphi +k^2 \varphi=0\qquad  \quad \text{in } \R^m \setminus \overline{B}_R & \\
&\qquad \varphi=f\qquad \qquad  \quad \text{on } \partial B_R&\\
& \lim\limits_{r \rightarrow \infty} r^{\frac{m-1}{2}} \left( \frac{\partial \varphi}{\partial r} -ik \varphi \right)=0&
\end{eqnarray*}
with $B_R=\{ x \in \R^m : |x| < R\}$. With help of Dirichlet-to-Neumann operator we can write (\ref{sourcep1}) in the following  equivalent variational form: find $u\in H^1(B_R)$ such that  
 \begin{eqnarray}
\hspace*{-1.5cm}  &&\int\limits_{B_R} A_0\grad u \cdot  \grad \overline{\varphi} -k^2 n_0u  \overline{\varphi}\, dx -\int\limits_{\partial B_R} {\mathbb T}_k  u \,\overline{\varphi}\, ds \nonumber\\
&&=\int\limits_{D_0} (A-A_0)\grad v \cdot  \grad \overline{\varphi} -k^2 (n-n_0)v \overline{\varphi}\, dx, \quad \forall \varphi \in H^1(B_R),  \label{hadjointform0}
 \end{eqnarray}
which will be used frequently in what follows. It is standard  to shown that the above problem is well-posed, and furthermore if $v=u_b|_{D_0}$ we see that the scattered field $u^s=u^s_0-u^s_b$ (where $u^s_b$ and $u^s_0$ are the scattered fields for  \eqref{bgp1}-\eqref{bgp3} and  \eqref{defectp1}-\eqref{defectp4}, respectively) must coincide with $u$ given by \eqref{sourcep1}.
We now define the source-to-far field pattern operator as 
$$G: H^1(D_0) \longmapsto L^2(\mathbb{S}) \quad  \text{given by} \quad Gv:=u^{\infty}.$$
In addition, let us define
$$v^b_g(x):=\int\limits_{\mathbb{S}} u_b(x,d) g(d) \, ds(d), \qquad \qquad g\in L^2(\mathbb {S}),\;\; x\in{\mathbb R}^m$$
where $u_b (x,d)=u^s_b(x,d) + e^{ikx \cdot d}$ solves \eqref{bgp1}-\eqref{bgp3}  and consider the bounded linear operator 
 $$H: L^2(\mathbb{S}) \mapsto  H^1(D_0), \quad \text {defined by } \quad Hg:= v^b_g|_{D_0}.$$
Obviously $F=GH$. To further factorize the operator $F$ we first  need to compute the adjoint  $H^*: H^1(D_0)\to L^2(\mathbb{S})$ of  the operator $H$ defined above.  
 \begin{lemma}\label{hadjoint}
 The operator $H^*: H^1(D_0  ) \longmapsto L^2(\mathbb{S})$ is given by 
 $$-\gamma_m H^* v= \mathcal{S}_b^* \tilde{v}^{\infty}$$
 where $\tilde{v}^{\infty}$ is the far field pattern of the radiating field  $\tilde{v} \in H^1_{loc}(\R^m)$ satisfying 
 \begin{eqnarray}
\hspace*{-1.5cm}  -\int\limits_{B_R} A\grad \tilde{v} \cdot  \grad \overline{\varphi} -k^2 n\tilde{v}  \overline{\varphi}\, dx +\int\limits_{\partial B_R} \overline{\varphi} {\mathbb T}_k  \tilde{v}\, ds =(v,\varphi)_{H^1(D_0)}, \quad \forall \varphi \in H^1(B_R)  \label{hadjointform}
 \end{eqnarray}
 \end{lemma}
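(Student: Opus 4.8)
The plan is to turn the definition of the adjoint into a boundary-integral identity on a large ball $\partial B_R$ with $\overline{D}\subset B_R$, where the far-field representation \eqref{ffpdef} and the standard behaviour of radiating solutions at infinity can be exploited. First, I would identify the abstract action of $H^*$: for $g\in L^2(\mathbb S)$ and $v\in H^1(D_0)$, since $Hg=v^b_g|_{D_0}$ with $v^b_g=\int_{\mathbb S}u_b(\cdot,d)g(d)\,ds(d)$, linearity of the inner product gives $(Hg,v)_{H^1(D_0)}=\int_{\mathbb S}g(d)\,(u_b(\cdot,d),v)_{H^1(D_0)}\,ds(d)$, and comparing with $(g,H^*v)_{L^2(\mathbb S)}$ yields $(H^*v)(d)=\overline{(u_b(\cdot,d),v)_{H^1(D_0)}}=(v,u_b(\cdot,d))_{H^1(D_0)}$. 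Thus everything reduces to computing the $H^1(D_0)$-pairing of $v$ against the total background field $u_b(\cdot,d)$.

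Next I would insert $\varphi=u_b(\cdot,d)|_{B_R}\in H^1(B_R)$ as a test function in the variational characterization \eqref{hadjointform} of $\tilde v$. Its right-hand side becomes exactly $(v,u_b(\cdot,d))_{H^1(D_0)}=(H^*v)(d)$, while its left-hand side equals $-\int_{B_R}(A\grad\tilde v\cdot\grad\overline{u_b}-k^2 n\,\tilde v\,\overline{u_b})\,dx+\int_{\partial B_R}\overline{u_b}\,{\mathbb T}_k\tilde v\,ds$. Since $A$ is symmetric and $A,n$ are real-valued, one integrates by parts in the volume term so as to move the gradient off $\tilde v$; the interior contribution vanishes because $\overline{u_b}$ solves the complex conjugate of \eqref{bgp1} separately in $D$ and in $\R^m\setminus\overline{D}$, and the jump contributions on $\partial D$ cancel by the transmission condition $\partial_{\nu_A}u_b^-=\partial_\nu u_b^+$, leaving $\int_{B_R}(A\grad\tilde v\cdot\grad\overline{u_b}-k^2 n\,\tilde v\,\overline{u_b})\,dx=\int_{\partial B_R}\tilde v\,\partial_\nu\overline{u_b}\,ds$. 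Moreover ${\mathbb T}_k\tilde v=\partial_\nu\tilde v$ on $\partial B_R$ because $\tilde v$ is radiating and solves $\Delta\tilde v+k^2\tilde v=0$ in $\R^m\setminus\overline{B_R}$. Combining these, $(H^*v)(d)=\int_{\partial B_R}(\overline{u_b(y,d)}\,\partial_{\nu_y}\tilde v(y)-\tilde v(y)\,\partial_{\nu_y}\overline{u_b(y,d)})\,ds_y$.

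Finally I would split $u_b(y,d)=e^{iky\cdot d}+u_b^s(y,d)$ and evaluate each piece. The plane-wave part $\int_{\partial B_R}(e^{-iky\cdot d}\partial_{\nu_y}\tilde v-\tilde v\,\partial_{\nu_y}e^{-iky\cdot d})\,ds_y$ is, by the far-field representation \eqref{ffpdef} applied to $\tilde v$ with $\Omega=B_R$, equal to $-\gamma_m^{-1}\tilde v^\infty(d)$. For the scattered part, $u_b^s(\cdot,d)$ and $\tilde v$ are both radiating solutions of the Helmholtz equation outside $\overline{B_R}$, so Green's second identity on $B_\rho\setminus B_R$ together with the Sommerfeld radiation condition and the far-field asymptotics give $\int_{\partial B_R}(\overline{u_b^s(y,d)}\partial_{\nu_y}\tilde v-\tilde v\,\partial_{\nu_y}\overline{u_b^s(y,d)})\,ds_y=2ik\int_{\mathbb S}\tilde v^\infty(\hat x)\overline{u_b^\infty(\hat x,d)}\,ds(\hat x)=2ik\,(F_b^*\tilde v^\infty)(d)$. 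Adding the two pieces, $-\gamma_m\,(H^*v)(d)=\tilde v^\infty(d)-2ik\gamma_m\,(F_b^*\tilde v^\infty)(d)$, which, in view of Definition \ref{deft}, equals $(\mathcal S_b^*\tilde v^\infty)(d)$; this proves the claim.

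The step I expect to be the main obstacle is the conversion of the volume integral over $B_R$ into the single boundary term on $\partial B_R$: this requires carefully legitimising the integrations by parts across the interface $\partial D$, using the transmission conditions together with the elliptic ($H^2$) regularity of $u_b$ in each subregion, so that all interior and interface contributions telescope away. The remaining work — applying \eqref{ffpdef} to $\tilde v$, passing to the limit $\rho\to\infty$ in Green's identity, and keeping careful track of the dimension-dependent constants $\gamma_m$ so that the final expression collapses exactly onto $\mathcal S_b^*\tilde v^\infty$ — is essentially bookkeeping, but the signs and complex conjugations must be handled with care.
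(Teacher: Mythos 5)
Your proposal is correct and follows essentially the same route as the paper's proof: test the variational problem for $\tilde v$ against the background total field, integrate by parts to reduce to a boundary integral on $\partial B_R$, split into plane-wave and scattered parts, and identify these via \eqref{ffpdef} and the radiating asymptotics as $-\gamma_m^{-1}\tilde v^\infty$ and $2ikF_b^*\tilde v^\infty$ respectively. The only (cosmetic) differences are that you compute $(H^*v)(d)$ pointwise in $d$ rather than pairing against $g$ and interchanging the order of integration, and you read off $F_b^*$ directly from the definition of the adjoint instead of invoking the reciprocity relation $u_b^\infty(\hat x,d)=u_b^\infty(-d,-\hat x)$ as the paper does.
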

\begin{proof}
Let $v \in  H^1(D_0)$ be given then we can construct a unique radiating field  $\tilde{v} \in H^1_{loc}(\R^m)$ that satisfies \eqref{hadjointform} (see Chapter 5 of \cite{p1}). Now we have that integration by parts gives 
\begin{eqnarray*}
&&(H^* v,g)_{L^2(\mathbb{S})} = (v,Hg)_{H^1(D_0)} \\
					    &&\hspace{1cm}= -\int\limits_{B_R}\left( A\grad \tilde{v} \cdot  \grad \overline{v_g^b} -k^2 n\tilde{v}  \overline{v_g^b}\right)\, dx +\int\limits_{\partial B_R} \overline{v_g^b} {\mathbb T}_k  \tilde{v}\, ds\\
					    &&\hspace{1cm}=\int\limits_{\partial B_R}\left( \overline{v_g^b} \frac{\partial  \tilde{v}}{\partial \nu} -  \tilde{v}  \frac{\partial \overline{v_g^b} }{\partial \nu}\right)\, ds+\int\limits_{B_R} \tilde{v} (\grad \cdot A \grad \overline{v_g^b} +k^2 n \overline{v_g^b}) \, dx  
\end{eqnarray*}
where we recall that  ${\displaystyle (v_g^b)(x)=\int_{\mathbb{S}} \left( u^s_b(x,d) + e^{ikx \cdot d} \right) g(d) \, ds(d) }$ for all of $x \in \R^m$. Using that the matrix $A$ is real symmetric along with $\Im \{n(x) \}=0$ and that $\grad \cdot A \grad {v_g^b} +k^2 n {v_g^b}=0$ in $\R^m$ gives that the integral over $B_R$ is zero. Now by using the definition of $v_g^b$ and changing the order of integration we have that  
\begin{eqnarray}
&&(H^* v,g)_{L^2(\mathbb{S})}  = \int\limits_{\partial B_R}\left( \overline{v_g^b} \frac{\partial  \tilde{v}}{\partial \nu} -  \tilde{v}  \frac{\partial \overline{v_g^b} }{\partial \nu}\right)\, ds_x\nonumber \\
					     &&\hspace{1cm}= \int\limits_{\mathbb{S}} \overline{g(d)} \left[ \, \int\limits_{\partial B_R} \left( \frac{\partial  \tilde{v}}{\partial \nu} e^{-ikx \cdot d} -  \tilde{v}  \frac{\partial e^{-ikx \cdot d} }{\partial \nu}\, ds_x\right)\right] \, ds(d)\nonumber \\
					    &&\hspace{1cm}+ \int\limits_{\mathbb{S}} \overline{g(d)} \left[ \, \int\limits_{\partial B_R} \left( \overline{ u^s_b(x,d) } \frac{\partial  \tilde{v}}{\partial \nu}  -  \tilde{v}  \frac{\partial \overline{u^s_b(x,d)} }{\partial \nu}\right)\, ds_x \right] \, ds(d).\label{sss}
\end{eqnarray}
We notice that \eqref{ffpdef} gives that 
\begin{eqnarray}
\hspace*{-1cm}\int\limits_{\mathbb{S}} \overline{g(d)} \left[ \, \int\limits_{\partial B_R} \left( \frac{\partial  \tilde{v}}{\partial \nu} e^{-ikx \cdot d} -  \tilde{v}  \frac{\partial e^{-ikx \cdot d} }{\partial \nu}\right)\, ds_x \right] \, ds(d) = -\frac{1}{\gamma_m}(\tilde{v}^{\infty},g)_{L^2(\mathbb{S})}. \label{hadjointint1}
\end{eqnarray}
Using the asymptotic behavior of a  radiating solution to Helmholtz equation and its derivative   (see \cite{p1} for the case of ${\R}^2$ and \cite{coltonkress} for the case of ${\R}^3$) and  letting $R \rightarrow \infty$ the second integral in (\ref{sss}) becomes
$$\int\limits_{\mathbb{S}} \overline{g(d)} \left[ 2ik  \int\limits_{\mathbb{S}} \tilde{v}^{\infty}(\hat{x}) \overline{u_b^{\infty}(\hat{x}, d )} \, ds(\hat{x}) \right] \, ds(d)$$
Using the reciprocity identity $u_b^{\infty}(\hat{x}, d )=u_b^{\infty}( -d, -\hat{x} )$ (see Theorem 7.30 in \cite{p1}) and making the change of variables $\hat{x} \longmapsto -\hat{x}$ we obtain
\begin{eqnarray}
\int\limits_{\mathbb{S}} \overline{g(d)} \left[ 2ik  \int\limits_{\mathbb{S}} \tilde{v}^{\infty}(-\hat{x}) \overline{u_b^{\infty}( -d, \hat{x} )} \, ds(\hat{x}) \right] \, ds(d)=2ik(F^*_b \tilde{v}^{\infty}, g)_{L^2(\mathbb{S})}.\label{hadjointint2}
\end{eqnarray}
Finally,  combining \eqref{hadjointint1} and \eqref{hadjointint2} we have that 
$$H^* v= \left(-\frac{1}{\gamma_m}I +2ik F^*_b \right) \tilde{v}^{\infty}$$
giving the result by multiplying by $-\gamma_m$ and by Definition \ref{deft}. 
\end{proof}
Now for any given $\phi \in  H^1(D_0)$ we can construct  a function $w_{\phi} \in  H^1_{loc}(\R^m)$ that satisfies
\begin{eqnarray}
&&\grad \cdot A\grad w_\phi +k^2 n w_\phi=\grad \cdot (A-A_0)  \grad \phi +k^2(n-n_0)\phi \,  \textrm{ in } \,  \R^m \label{sourcepw}\\
&&\lim\limits_{r \rightarrow \infty} r^{\frac{m-1}{2}} \left( \frac{\partial w_\phi}{\partial r} -ik w_\phi \right)=0 \nonumber 
\end{eqnarray}
and then let $u \in H^1_{loc}(\R^m)$ be the unique solution to \eqref{sourcep1} for a given $v \in H^1(D_0)$. Now by letting $\phi= v+u|_{D_0}$ and the corresponding  $w:=w_{\phi}$,  we observe that this $w$ satisfies the variational problem 
\begin{eqnarray*}
&&\hspace*{-1cm}-\int\limits_{B_R} A\grad w \cdot  \grad \overline{\varphi} -k^2 nw \overline{\varphi}\, dx + \int\limits_{\partial B_R} \overline{\varphi} {\mathbb T}_k w\, ds  = - \int\limits_{D_0} (A-A_0) \grad (v+u) \cdot \grad \overline{ \varphi } \, dx \\
 &&\hspace{2cm} +k^2  \int\limits_{D_0} (n-n_0)(v+u) \overline{ \varphi } \, dx \qquad \forall \varphi \in H^1(B_R).
\end{eqnarray*}
Next by means of Riesz representation theorem, we define the bounded linear operator $T: H^1(D_0  ) \longmapsto H^1(D_0  )$  such that for all $\varphi \in H^1(D_0)$
\begin{equation} 
(T v, \varphi)_{ H^1(D_0  ) }= - \int\limits_{D_0} (A-A_0) \grad (v+u) \cdot \grad \overline{ \varphi }  \, dx  + \int\limits_{D_0} k^2 (n-n_0)(v+u) \overline{ \varphi } \, dx.   \,\,\,  \label{tk}
\end{equation}
Notice that the function $u$ defined by solving \eqref{sourcep1} satisfies
\begin{eqnarray}
\grad \cdot A\grad u +k^2 n u=\grad \cdot (A-A_0)  \grad (v+u) +k^2(n-n_0)(v+u)  \text{  in  }  \R^m  \label{sourcep2}
\end{eqnarray}
together with the Sommerfeld radiation condition, which gives that $u=w$ in $\R^m$ since \eqref{sourcepw} is well-posed. Therefore we conclude that $u^{\infty}=w^{\infty}$. Now by the definition of the operators $G$ we have that $u^{\infty}=Gv$ while using the definition of $H^*$ and  $T$ we have that $w^{\infty}=-\gamma_m\mathcal{S}_b H^*T v$. We now conclude that $Gv =-\gamma_m\mathcal{S}_b H^*T v$.  From the above analysis and the fact that $F=GH$ we have the following factorization. 

\begin{theorem}
The far field operator $F: L^2(\mathbb{S}) \longmapsto  L^2(\mathbb{S})$ associated with \eqref{sourcep1}   can be factorized as $F=-\gamma_m \mathcal{S}_bH^*TH$.
\end{theorem}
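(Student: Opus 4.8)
\section*{Proof proposal}

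The plan is to assemble the theorem from the two partial factorizations already in hand, namely $F=GH$ (established right after the definitions of $G$ and $H$) and the identity $G=-\gamma_m\mathcal{S}_b H^*T$, which is essentially the content of the computation immediately preceding the statement. Once the latter is in place, the theorem follows by associativity of composition: $F=GH=(-\gamma_m\mathcal{S}_bH^*T)H=-\gamma_m\mathcal{S}_bH^*TH$. So the real work is to justify $G=-\gamma_m\mathcal{S}_bH^*T$ rigorously.

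First I would fix $v\in H^1(D_0)$ and let $u\in H^1_{loc}(\R^m)$ be the unique radiating solution of the source problem \eqref{sourcep1}; well-posedness here is standard via the $\mathbb{T}_k$-truncated variational form \eqref{hadjointform0}, using the coercivity coming from $a_{min},n_{min}$, the sign conditions on $\Im\tilde A_0$ and $\Im\tilde n_0$, unique continuation, and a Fredholm argument. Next, setting $\phi:=v+u|_{D_0}$ and letting $w:=w_\phi$ be the radiating solution of \eqref{sourcepw} with this datum, I would observe that $u$ itself satisfies the same equation \eqref{sourcep2}; since \eqref{sourcepw} is uniquely solvable this forces $u=w$ in $\R^m$, hence $u^\infty=w^\infty$. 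On one hand $u^\infty=Gv$ by definition of $G$. On the other hand, writing out the $\mathbb{T}_k$-truncated weak form for $w$ and invoking the definition \eqref{tk} of $T$, its right-hand side is exactly $(Tv,\varphi)_{H^1(D_0)}$, so $w$ coincides with the field $\tilde v$ of Lemma \ref{hadjoint} when the datum there is taken to be $Tv$. Therefore $w^\infty=\widetilde{(Tv)}^{\infty}$ and Lemma \ref{hadjoint} yields $-\gamma_m H^*(Tv)=\mathcal{S}_b^*\,w^\infty$.

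Finally, since $\mathcal{S}_b$ is unitary by Definition \ref{deft}, applying $\mathcal{S}_b$ to both sides gives $w^\infty=-\gamma_m\mathcal{S}_bH^*Tv$, and combining with $Gv=u^\infty=w^\infty$ we conclude $G=-\gamma_m\mathcal{S}_bH^*T$, whence $F=GH=-\gamma_m\mathcal{S}_bH^*TH$. The one genuinely delicate step---and the one I would check most carefully---is the bookkeeping that identifies the right-hand side of the truncated weak form for $w$ with $(Tv,\cdot)_{H^1(D_0)}$ and hence $w$ with the field $\tilde v$ of Lemma \ref{hadjoint} at datum $Tv$: everything downstream hinges on matching \eqref{sourcepw} against \eqref{hadjointform} term by term, including the signs. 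Boundedness of $T$ (needed for the Riesz representation defining it in \eqref{tk}), and of $G$ and $H$, is routine from the $H^1$-estimates for the solution operators of \eqref{sourcep1} and \eqref{sourcepw}.
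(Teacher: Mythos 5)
Your proposal is correct and follows essentially the same route as the paper: establish $F=GH$, show that the solution $u$ of \eqref{sourcep1} coincides with $w_\phi$ for $\phi=v+u|_{D_0}$, identify the right-hand side of the truncated weak form for $w$ with $(Tv,\cdot)_{H^1(D_0)}$ so that Lemma \ref{hadjoint} gives $w^\infty=-\gamma_m\mathcal{S}_bH^*Tv$, and conclude $G=-\gamma_m\mathcal{S}_bH^*T$. The step you flag as delicate (matching \eqref{sourcepw} against \eqref{hadjointform} with datum $Tv$) is exactly the step the paper relies on, and your sign bookkeeping is consistent with it.
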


\section{The Factorization Method }
\noindent In this section we connect the support of the defect $D_0$ to the range of an operator defined by the measured far field operator based on the factorization method discussed in \cite{kirschbook} or  \cite{armin}. We make this connection by analyzing the factorization of the far field operator developed in the previous section.  Defining $\tilde F:=\gamma_m^{-1}\mathcal{S}_b^*F$, we recall from  the previous section that  we have the following factorization $\tilde F=-H^*TH$. Under appropriate assumptions on the operators $H$ and $T$ the factorization method states that   the range of the operators $H^*: H^1(D_0) \mapsto L^2(\mathbb{S})$ and $\tilde{F}_{\sharp}^{1/2} : L^2(\mathbb{S}) \mapsto L^2(\mathbb{S})$ coincide, where $\tilde{F}_{\sharp}= |\Re(\tilde{F})| +|\Im(\tilde{F})|$. 

To arrive at the above range test we use the abstract theorems proven in \cite{kirschbook} and \cite{armin} on the range identities. To this end, we recall that for a generic bounded linear operator $B:X \to Y$, where $X$ and $Y$ are Banach spaces, we define the real and imaginary part selfajoint operators by
$$\Re(B)=\frac{B+B^*}{2}  \quad \text{and}  \quad \Im(B)=\frac{B-B^*}{2i}.$$ Furthermore for a generic self-adjoint compact operator $B$ on a Hilbert space $U$, $|B|$ is defined in terms of the spectral decomposition as $|B|(x) = \sum |\lambda_j | (x, \psi_j) \psi_j$ for all $x\in U$ where  $(\lambda_j , \psi_j) \in \R \times U$ is the  orthonormal eigensystem of $B$. 
Now, let $X \subset U \subset X^*$ be a Gelfand triple with a Hilbert space $U$ and a reflexive Banach space $X$ such that the embedding is dense. Furthermore, let $Y$ be a second Hilbert space and let $\tilde{F}: Y \mapsto Y$, $H: Y \mapsto X$ and $T: X \mapsto X^*$ be linear bounded operators such that $\tilde{F}=H^*TH$.
\begin{theorem}(Theorem 2.15 in \cite{kirschbook}) \label{fmthm1}
Assume that
\begin{enumerate}
\item $H^*$ is compact with dense range.
\item There exists  $t \in [0, 2 \pi ]$ such that $\Re(e^{it}T)$ is the sum of a compact operator and a self adjoint coercive operator. 
\item $\Im(T)$ is compact and non-negative on the range $\mathcal{R}(H)$ of $H$.
\item $\Re(e^{it}T)$ is injective or $\Im(T)$ is strictly positive on the closure $\overline{ \mathcal{R}(H) }$.
\end{enumerate}
Then the operator $\tilde{F}_{\sharp}= |\Re(e^{it} \tilde{F})| +\Im(\tilde{F})$ is positive, and the range of the operators $H^*: X^* \mapsto Y$ and $\tilde{F}_{\sharp}^{1/2} : Y \mapsto Y$ coincide. 
\end{theorem}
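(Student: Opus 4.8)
This is the abstract range identity underlying the factorization method, so the plan is to follow the Kirsch--Grinberg line of argument \cite{kirschbook,armin}. The only elementary ingredient used repeatedly is the classical range identity: for a bounded operator $B:Y_{1}\to Y_{2}$ between Hilbert spaces, $\mathcal R(B)=\mathcal R\big((BB^{*})^{1/2}\big)$. The strategy is to first observe that the real and imaginary parts of $\tilde F$ inherit the factorization — $\Re(e^{it}\tilde F)=H^{*}\Re(e^{it}T)H$ and $\Im(\tilde F)=H^{*}\Im(T)H$ — then to promote this to a single factorization $\tilde F_{\sharp}=H^{*}\check TH$ with $\check T$ of ``coercive $+$ compact'' type and coercive on $\overline{\mathcal R(H)}$, and finally to read off the range identity from that factorization. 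As a preliminary, hypothesis~3 gives $\langle\Im(\tilde F)g,g\rangle=\langle\Im(T)Hg,Hg\rangle\ge 0$, so $\Im(\tilde F)\ge 0$ and hence $\tilde F_{\sharp}\ge 0$, which is the first assertion; and by hypothesis~1, $\tilde F$ and $\tilde F_{\sharp}$ are compact and self-adjoint, so the modulus and square roots below are well defined.

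\noindent\emph{Step 1: factoring $\tilde F_{\sharp}$.} The first step is to write $\tilde F_{1}:=\Re(e^{it}\tilde F)=H^{*}T_{1}H$ with $T_{1}:=\Re(e^{it}T)=T_{0}+T_{c}$, $T_{0}$ coercive self-adjoint, $T_{c}$ compact (hypothesis~2). Since $T_{0}$ is coercive, $T_{1}$ is coercive-plus-compact, and one shows (via the $T_{0}$-orthogonal spectral decomposition of $T_{0}^{-1}T_{1}$) that $\langle T_{1}\cdot,\cdot\rangle$ is positive on a finite-codimension subspace of $X$; pulling this back by the injective $H$, $\tilde F_{1}$ has only finitely many non-positive eigenvalues $\lambda_{1},\dots,\lambda_{N}$ with eigenvectors $\psi_{1},\dots,\psi_{N}$. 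Each $\psi_{j}=\lambda_{j}^{-1}H^{*}T_{1}H\psi_{j}\in\mathcal R(H^{*})$, so a short computation rewrites the finite-rank negative part $\tilde F_{1}^{-}=\sum_{j}|\lambda_{j}|\langle\cdot,\psi_{j}\rangle\psi_{j}$ as $H^{*}R_{0}H$ with $R_{0}:X\to X^{*}$ finite-rank self-adjoint. Hence $|\tilde F_{1}|=\tilde F_{1}+2\tilde F_{1}^{-}=H^{*}(T_{1}+2R_{0})H$, and therefore
\[
\tilde F_{\sharp}=|\tilde F_{1}|+\Im(\tilde F)=H^{*}\check T H,\qquad \check T:=T_{0}+\big(T_{c}+2R_{0}+\Im(T)\big),
\]
which is again coercive-plus-compact; moreover $\langle\check T\,Hg,Hg\rangle=\langle\tilde F_{\sharp}g,g\rangle\ge 0$, so $\check T\ge 0$ on $\mathcal R(H)$.

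\noindent\emph{Step 2 (the hard step): coercivity of $\check T$ on $\overline{\mathcal R(H)}$.} Here hypothesis~4 enters as a dichotomy. If $\Im(T)$ is strictly positive on $\overline{\mathcal R(H)}$, then $\langle\check T\phi,\phi\rangle\ge\langle\Im(T)\phi,\phi\rangle>0$ for $0\neq\phi\in\overline{\mathcal R(H)}$, so $\check T$ is positive-definite there. If instead $\Re(e^{it}T)=T_{1}$ is injective on $\overline{\mathcal R(H)}$, then $\langle\check T\phi,\phi\rangle=0$ with $\phi=Hg$ forces $\langle|\tilde F_{1}|g,g\rangle=0$, hence $\tilde F_{1}g=0$ and $T_{1}Hg\in\ker H^{*}=(\mathcal R(H))^{\circ}$, and using self-adjointness of $T_{1}$ together with the injectivity on $\overline{\mathcal R(H)}$ one gets $\phi=0$ — again positive-definiteness of $\check T$ on $\overline{\mathcal R(H)}$. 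In either case a Riesz--Fredholm argument (``coercive $+$ compact $+$ positive-definite on a closed subspace $\Rightarrow$ coercive on it'') upgrades this to coercivity of $\check T$ on $\overline{\mathcal R(H)}$. I expect this to be the main obstacle, and in particular the injectivity case: one has to track the hypothesis on $T_{1}$ through the finite-rank modification $R_{0}$ and the restriction to $\overline{\mathcal R(H)}$.

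\noindent\emph{Step 3: the range identity.} Given $\tilde F_{\sharp}=H^{*}\check TH$ with $\check T$ coercive on $\overline{\mathcal R(H)}$, I would equip $\widetilde X:=\overline{\mathcal R(H)}$ with the equivalent inner product $[\phi,\psi]:=\langle\check T\phi,\psi\rangle$; then $H:Y\to\widetilde X$ is bounded, injective, with dense range, its adjoint is $H^{\sharp}=H^{*}\check T|_{\widetilde X}$, and $\tilde F_{\sharp}=H^{\sharp}H$, so the range identity gives $\mathcal R(\tilde F_{\sharp}^{1/2})=\mathcal R(H^{\sharp})=H^{*}\big(\check T(\overline{\mathcal R(H)})\big)$. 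Finally, coercivity of $\check T$ on $\overline{\mathcal R(H)}$ forces (a closed-range/duality argument) $X^{*}=\check T(\overline{\mathcal R(H)})+\ker H^{*}$, so $\mathcal R(\tilde F_{\sharp}^{1/2})=H^{*}(X^{*})=\mathcal R(H^{*})$, which is the claim. All of Step~3 is standard Gelfand-triple manipulation — the one point requiring some care being the passage between operators $X\to X^{*}$ and genuine Hilbert-space operators via the pivot space $U$ — while the real work is in Steps~1--2, above all the coercivity in Step~2.
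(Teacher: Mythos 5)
You should first be aware that the paper contains no proof of this statement: it is imported verbatim as Theorem 2.15 of \cite{kirschbook} and used as a black box, so there is no in-paper argument to compare against. Your reconstruction follows the standard Kirsch--Grinberg/Lechleiter architecture — absorb the finite-rank negative part of $\Re(e^{it}\tilde F)$ into the middle operator via $\psi_j\in\mathcal{R}(H^*)$ to get $\tilde F_\sharp=H^*\check TH$ with $\check T$ coercive-plus-compact and non-negative on $\mathcal{R}(H)$, upgrade to coercivity on $\overline{\mathcal{R}(H)}$ by a weak-compactness contradiction, and finish with the range identity $\mathcal{R}(B^*)=\mathcal{R}\bigl((B^*B)^{1/2}\bigr)$ computed in the $\check T$-inner product. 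Steps 1 and 3, and the branch of Step 2 in which $\Im(T)$ is strictly positive on $\overline{\mathcal{R}(H)}$, are essentially right.

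The genuine gap is exactly where you flagged it: the injectivity branch of Step 2. From $\langle\check T\phi,\phi\rangle=0$ with $\phi=Hg$ you correctly deduce $T_1\phi\in\ker H^*=\mathcal{R}(H)^{\circ}$, but the inference ``$T_1$ self-adjoint and injective, $\phi\in\overline{\mathcal{R}(H)}$, $T_1\phi\in\mathcal{R}(H)^{\circ}$ $\Rightarrow$ $\phi=0$'' is false: an indefinite injective self-adjoint operator can map a subspace into its annihilator. Concretely, take $X=U=X^*=\mathbb{C}^2$, $Y=\mathbb{C}$, $Hg=(g,0)$, $T=T_1=\bigl(\begin{smallmatrix}0&1\\ 1&0\end{smallmatrix}\bigr)$, $t=0$: then $H^*$ is compact with dense range, $\Re(T)=I+(T-I)$ is coercive plus compact, $\Im(T)=0$ is compact and non-negative, and $\Re(T)$ is injective — yet $\tilde F=H^*TH=0$, so $\mathcal{R}(\tilde F_\sharp^{1/2})=\{0\}\neq\mathbb{C}=\mathcal{R}(H^*)$. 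So the statement as transcribed cannot be proved; the hidden hypothesis making it true in the sources is that $H$ has dense range in $X$ (equivalently $\ker H^*=\{0\}$, after which $T_1\phi\in\ker H^*$ plus injectivity immediately gives $\phi=0$, with no appeal to self-adjointness). If one insists on $H$ merely injective, one must first replace $X$ by $\overline{\mathcal{R}(H)}$ and $T$ by its compression, and then the injectivity hypothesis must be imposed on, or re-derived for, the compressed operator — injectivity of $\Re(e^{it}T)$ on all of $X$ does not transfer. A secondary, fixable point: your definiteness argument is carried out only for $\phi=Hg\in\mathcal{R}(H)$, whereas the weak limit in the Riesz--Fredholm contradiction lies only in $\overline{\mathcal{R}(H)}$; to cover the closure you need the Cauchy--Schwarz inequality for the non-negative Hermitian forms $\langle(T_1+2R_0)\cdot,\cdot\rangle$ and $\langle\Im(T)\cdot,\cdot\rangle$ restricted to $\overline{\mathcal{R}(H)}$.
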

\begin{theorem}(Theorem 2.1 in \cite{armin}) \label{fmthm2}
Assume that 
\begin{enumerate}
\item $H$ is compact and injective.
\item  $\Re(T)$ is the sum of a compact operator and a self adjoint coercive operator. 
\item $\Im(T)$ is non-negative on $X$.\\
\hspace*{-1.2cm} Moreover assume that either of the following is satisfied:
\item $T$ is injective. 
\item $\Im(T)$ is strictly positive on the (finite dimensional) null space of $\Re (T)$.
\end{enumerate}
Then the operator $\tilde{F}_{\sharp}= |\Re(\tilde{F})| +\Im(\tilde{F})$ is positive, and the range of the operators $H^*: X^* \mapsto Y$ and $\tilde{F}_{\sharp}^{1/2} : Y \mapsto Y$ coincide. 
\end{theorem}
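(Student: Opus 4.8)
\noindent The plan is to check that the operators $H$ and $T$ built in Section~3 satisfy the hypotheses of this abstract scheme and then to reduce the range identity to a quantitative two--sided estimate on $\tilde F_{\sharp}$; the proof of Theorem~\ref{fmthm1} is identical with $\Re(\tilde F)$ and $\Re(T)$ replaced by $\Re(e^{it}\tilde F)$ and $\Re(e^{it}T)$, so only the version stated as Theorem~\ref{fmthm2} must be carried out. \textbf{Step 1 (well--defined objects and reduction).} Since $H:Y\to X$ is compact, so is $H^{*}:X^{*}\to Y$, hence $\tilde F=H^{*}TH$ is compact; from $\tilde F^{*}=H^{*}T^{*}H$ we get $\Re(\tilde F)=H^{*}\Re(T)H$ and $\Im(\tilde F)=H^{*}\Im(T)H$, both self--adjoint and compact on $Y$, so $|\Re(\tilde F)|$ is defined via the spectral decomposition, $\tilde F_{\sharp}=|\Re(\tilde F)|+\Im(\tilde F)$ is self--adjoint compact, and $\tilde F_{\sharp}^{1/2}$ makes sense. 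Also $(\Im(\tilde F)\psi,\psi)_{Y}=\langle\Im(T)H\psi,H\psi\rangle\ge 0$ by hypothesis~3. The whole statement then follows once we prove
\begin{equation}
c_{1}\,\|H\psi\|_{X}^{2}\ \le\ (\tilde F_{\sharp}\psi,\psi)_{Y}\ \le\ c_{2}\,\|H\psi\|_{X}^{2}\qquad\text{for all }\psi\in Y \label{plan:bound}
\end{equation}
with $c_{1},c_{2}>0$: indeed $(\tilde F_{\sharp}\psi,\psi)_{Y}=\|\tilde F_{\sharp}^{1/2}\psi\|_{Y}^{2}$, so \eqref{plan:bound} is a two--sided comparison of $\|\tilde F_{\sharp}^{1/2}\psi\|$ and $\|H\psi\|$, and by the standard range lemma (Douglas: $\mathcal R(B_{1})\subseteq\mathcal R(B_{2})$ iff $\|B_{1}^{*}\psi\|\le c\|B_{2}^{*}\psi\|$), applied with $B_{1}=\tilde F_{\sharp}^{1/2}=(\tilde F_{\sharp}^{1/2})^{*}$, $B_{2}=H^{*}$ and then with the roles reversed, this gives $\mathcal R(\tilde F_{\sharp}^{1/2})=\mathcal R(H^{*})$; positivity of $\tilde F_{\sharp}$ follows from the lower bound and injectivity of $H$.

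\textbf{Step 2 (normal form of $\Re(T)$).} Using hypothesis~2 write $\Re(T)=C+K$ with $C:X\to X^{*}$ self--adjoint coercive (an isomorphism) and $K$ compact self--adjoint, so $\Re(T)$ is Fredholm of index zero and $N:=\ker\Re(T)$ is finite dimensional. Renorming $X$ by the equivalent inner product $\langle C\,\cdot,\cdot\rangle$, the operator $C^{-1}\Re(T)=I+C^{-1}K$ is self--adjoint with essential spectrum $\{1\}$; diagonalizing it yields a $C$--orthogonal splitting $X=X_{+}\oplus N\oplus X_{-}$ on which $\Re(T)$ is coercive and positive on $X_{+}$, zero on $N$, and negative definite on the finite--dimensional space $X_{-}$, and which simultaneously diagonalizes $|\Re(T)|$. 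Consequently $\Re(\tilde F)=H^{*}\Re(T)H$ is a compact self--adjoint operator with only finitely many negative eigenvalues; moreover each associated eigenvector $e_{j}$ satisfies $e_{j}=\lambda_{j}^{-1}H^{*}\Re(T)He_{j}\in\mathcal R(H^{*})$, whence $|(\psi,e_{j})_{Y}|\le c_{j}\|H\psi\|_{X}$. This last fact is the mechanism that converts estimates in $\|\psi\|$ into estimates in $\|H\psi\|$ on the finitely many bad directions.

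\textbf{Step 3 (the two--sided estimate --- the crux).} The delicate point is that $|\Re(\tilde F)|$ is \emph{not} $H^{*}|\Re(T)|H$, so \eqref{plan:bound} does not follow by substitution; one must split off the finite--dimensional negative spectral subspace $P_{-}Y$ of $\Re(\tilde F)$. For the upper bound: on $(I-P_{-})Y$ one has $|\Re(\tilde F)|=\Re(\tilde F)=H^{*}\Re(T)H\le\|\Re(T)\|\,H^{*}H$, and $\|H(I-P_{-})\psi\|_{X}\le c\|H\psi\|_{X}$ (using $e_{j}\in\mathcal R(H^{*})$), while $\langle|\Re(\tilde F)|P_{-}\psi,P_{-}\psi\rangle\le c\|H\psi\|_{X}^{2}$ for the same reason; adding $\langle\Im(T)H\psi,H\psi\rangle\le\|\Im(T)\|\,\|H\psi\|_{X}^{2}$ gives the right--hand inequality. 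For the lower bound one argues by contradiction: if it failed there would be $\psi_{n}$ with $\|H\psi_{n}\|_{X}=1$ and $(\tilde F_{\sharp}\psi_{n},\psi_{n})\to 0$; since $|\Re(\tilde F)|\ge 0$ and $\Im(\tilde F)\ge 0$ this forces both $\langle|\Re(\tilde F)|\psi_{n},\psi_{n}\rangle\to 0$ and $\langle\Im(T)H\psi_{n},H\psi_{n}\rangle\to 0$, hence $P_{-}\psi_{n}\to 0$ and $\langle\Re(T)H(I-P_{-})\psi_{n},H(I-P_{-})\psi_{n}\rangle\to 0$; passing to a weak limit $H\psi_{n}\rightharpoonup\phi$ in the reflexive space $X$, using compactness of $K$ and of the finite--rank corrections together with weak lower semicontinuity of $\langle C\,\cdot,\cdot\rangle$ and of the non--negative form $\langle\Im(T)\,\cdot,\cdot\rangle$, one obtains $\phi\in N$ with $\langle\Im(T)\phi,\phi\rangle=0$; by hypothesis~4 (if $T$ is injective, $\langle\Im(T)\phi,\phi\rangle=0$ with $\Im(T)\ge0$ gives $\Im(T)\phi=0$, so $T\phi=\Re(T)\phi+i\Im(T)\phi=0$, hence $\phi=0$; otherwise $\Im(T)$ is positive on $N$ and again $\phi=0$), so that $\langle CH\psi_{n},H\psi_{n}\rangle=\langle\Re(T)H\psi_{n},H\psi_{n}\rangle-\langle KH\psi_{n},H\psi_{n}\rangle\to 0$, contradicting $\langle CH\psi_{n},H\psi_{n}\rangle\ge c\|H\psi_{n}\|_{X}^{2}=c>0$.

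\textbf{Main obstacle.} The heart of the matter is Step~3: the bookkeeping that isolates the finite--dimensional kernel of $\Re(T)$ and the finite--dimensional negative part of $\Re(\tilde F)$ from the coercive part, and that reconciles $|H^{*}\Re(T)H|$ with $H^{*}|\Re(T)|H$ up to finite rank. This is precisely where allowing $\Re(T)$ to be merely coercive--plus--compact (rather than coercive) and $H$ merely injective (rather than of dense range) --- the generality actually needed for the operators $H$, $T$ of our $D_{0}$ problem --- costs genuine effort; the remaining steps are routine functional analysis.
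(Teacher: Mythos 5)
First, a point of comparison: the paper does not prove this statement at all --- it is quoted verbatim as an abstract range--identity theorem (Theorem 2.1 of Lechleiter's paper, the companion of Theorem 2.15 in Kirsch--Grinberg) and is only \emph{applied} in Theorem \ref{fmtheorem}. So there is no in-paper proof to match; your attempt must be judged as a reconstruction of the cited result. Your overall architecture is the standard one: reduce the range identity to the two-sided estimate $c_1\|H\psi\|_X^2\le(\tilde F_\sharp\psi,\psi)_Y\le c_2\|H\psi\|_X^2$ and invoke the Douglas-type range lemma; the upper bound via $e_j=\lambda_j^{-1}H^*\Re(T)He_j\in\mathcal{R}(H^*)$ and the finite-dimensionality bookkeeping in Step 2 are essentially right.

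There is, however, a genuine gap exactly at the point you flag as the crux. In the lower-bound contradiction argument you pass to a weak limit $H\psi_n\rightharpoonup\phi$ and claim that compactness of $K$ plus weak lower semicontinuity of $\langle C\,\cdot,\cdot\rangle$ yields $\phi\in N=\ker\Re(T)$. It does not: those two facts give only $\langle\Re(T)\phi,\phi\rangle=\langle C\phi,\phi\rangle+\langle K\phi,\phi\rangle\le 0$, which is perfectly consistent with $\phi\notin N$ because $\Re(T)$ is \emph{indefinite} --- it is negative definite on the finite-dimensional subspace $X_-$ of your own splitting $X=X_+\oplus N\oplus X_-$. Since hypotheses 4 and 5 say nothing about vectors with $\langle\Re(T)\phi,\phi\rangle<0$, the argument cannot conclude $\phi=0$ and the contradiction never materializes. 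To close the gap one must use the \emph{full} strength of $(|\Re(\tilde F)|\psi_n,\psi_n)\to 0$ rather than merely $(\Re(\tilde F)\psi_n,\psi_n)\to 0$: since each term $|\lambda_j|\,|(\psi_n,e_j)|^2$ tends to zero, testing against every eigenvector $e_j$ of $\Re(\tilde F)$ gives $\langle\Re(T)\phi,He_j\rangle=0$ for all $j$ (for $\lambda_j=0$ one uses $\Re(T)He_j\perp\mathcal{R}(H)$ and $\phi\in\overline{\mathcal{R}(H)}$), and because $\{He_j\}$ spans a dense subspace of $\mathcal{R}(H)$ this forces $\Re(T)\phi=0$ --- \emph{provided one has first reduced to the case} $\overline{\mathcal{R}(H)}=X$, a reduction your proof never performs but which is also needed to reconcile the injectivity hypotheses on $X$ with the fact that only $P_{\overline{\mathcal{R}(H)}}T|_{\overline{\mathcal{R}(H)}}$ enters $\tilde F$. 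Only after $\Re(T)\phi=0$ and $\Im(T)\phi=0$ (the latter follows correctly from $\langle\Im(T)H\psi_n,H\psi_n\rangle\to0$ and Cauchy--Schwarz for the semi-definite form) do hypotheses 4 or 5 apply to give $\phi=0$ and hence the contradiction with coercivity. With these two missing ingredients supplied, your argument becomes the standard proof; without them, Step 3 does not go through.
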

We note that just as in the remark after Theorem 2.15 in \cite{kirschbook} we have that  if $\Im(T)$ is non-positive then both theorems hold for $\tilde{F}_{\sharp}= |\Re(\tilde{F})| -\Im(\tilde{F})$, hence in either case we can use $|\Re(\tilde{F})| +|\Im(\tilde{F})|$ in the range test.  

We dedicate this section to showing that $H$ and $T$ satisfy the necessary conditions to apply any of the above range tests. To this end, let's  define the interior transmission eigenvalue problem in the defective region $D_0$ as finding a pair $(w,v) \in H^1(D_0) \times H^1(D_0)$ such that for given $(f,h) \in H^{1/2}(\partial D_0) \times H^{-1/2}(\partial D_0)$ satisfies
\begin{eqnarray}
\grad \cdot A_0 \grad w +k^2 n_0 w=0 \,  &\textrm{ in }& \,  D_0 \label{fmitp1}\\
 \grad \cdot A \grad  v + k^2 n v=0  &\textrm{ in }& \,  D_0 \\
w-v= f  &\textrm{ on }& \partial D_0\\
  \frac{\partial w}{\partial \nu_{A_0}}-\frac{\partial v}{\partial \nu_{A}}=  h &\textrm{ on }& \partial D_0 \label{fmitp3}
\end{eqnarray}
\begin{definition} \label{trig} {\em The values of $k \in \R$  for which the homogeneous interior transmission problem,  i.e. \eqref{fmitp1}-\eqref{fmitp3} with  $(f,h)=(0,0)$, has nontrivial  solutions are called transmission eigenvalues for $D_0$.}
\end{definition}
The following results are know if $A_0=I$ and $n_0=1$. The proofs can be readily extended to the current case. We state the results and give the corresponding reference for the proof in the case of $A_0=I$ and $n_0=1$.
\begin{theorem}
 Assume that $A-\Re(A_0)$ is positive definite or negative definite. Then  \eqref{fmitp1}-\eqref{fmitp3} satisfies the Fredholm alternative, i.e if $k$ is not a transmission eigenvalue there exits a unique solution to  \eqref{fmitp1}-\eqref{fmitp3} that depends continuously on the data $(f,h)$.
\end{theorem}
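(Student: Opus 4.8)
The plan is to recast \eqref{fmitp1}--\eqref{fmitp3} as a single variational equation on a closed subspace of $H^1(D_0)\times H^1(D_0)$, to show that the associated sesquilinear form is a compact perturbation of an operator that becomes coercive after composition with a suitable isomorphism (a $T$-coercivity argument driven by the sign condition on $A-\Re(A_0)$), and then to invoke the Riesz--Fredholm theory. After subtracting from $w$ a continuous lifting $\chi_f\in H^1(D_0)$ of $f$, the problem is equivalent to finding $(w,v)$ in the Hilbert space $\mathbb{H}_0:=\{(w,v)\in H^1(D_0)\times H^1(D_0): w-v\in H^1_0(D_0)\}$ such that, for all $(w',v')\in\mathbb{H}_0$,
\[
\int_{D_0}\left(A_0\grad w\cdot\grad\overline{w'}-A\grad v\cdot\grad\overline{v'}\right)dx-k^2\int_{D_0}\left(n_0 w\,\overline{w'}-nv\,\overline{v'}\right)dx=\ell(w',v'),
\]
where $\ell\in\mathbb{H}_0^{*}$ collects the datum $h$ through the pairing $\langle h,w'\rangle_{\partial D_0}$ (meaningful since $w'=v'$ on $\partial D_0$) together with the volume terms generated by $\chi_f$, and $\|\ell\|\le C(\|f\|_{H^{1/2}(\partial D_0)}+\|h\|_{H^{-1/2}(\partial D_0)})$. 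This formulation is equivalent to \eqref{fmitp1}--\eqref{fmitp3}: testing with $(w',0)$, $w'\in H^1_0(D_0)$, recovers the equation for $w$; subtracting then recovers the equation for $v$; allowing arbitrary boundary traces recovers the Neumann interface condition \eqref{fmitp3}; and the Dirichlet interface condition is built into $\mathbb{H}_0$. Denote by $\mathbb{A}$ the bounded sesquilinear form on the left and by $\mathcal{A}:\mathbb{H}_0\to\mathbb{H}_0$ the operator it induces.

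Next I would split $\mathbb{A}=\mathbb{A}_1-k^2\mathbb{A}_2$ into its principal (gradient) part $\mathbb{A}_1$ and its lower-order part; since $H^1(D_0)$ embeds compactly into $L^2(D_0)$, the operator induced by $\mathbb{A}_2$ is compact on $\mathbb{H}_0$. For the principal part I would run a $T$-coercivity argument: construct an isomorphism $T$ of $\mathbb{H}_0$ of the type $T(w,v)=(w\mp 2Rv,\pm v)$, where $R$ is a bounded (e.g.\ harmonic) extension into $D_0$ of the common boundary trace $w|_{\partial D_0}=v|_{\partial D_0}$ and the signs are chosen according to whether $A-\Re(A_0)$ is negative or positive definite, such that
\[
\Re\,\mathbb{A}_1\left((w,v),T(w,v)\right)\ \ge\ c\left(\|\grad w\|^2_{L^2(D_0)}+\|\grad v\|^2_{L^2(D_0)}\right)-\left|\langle\mathcal{K}(w,v),(w,v)\rangle\right|
\]
for some $c>0$ and some compact operator $\mathcal{K}$. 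Here the ellipticity bounds $\xi\cdot A\xi\ge a_{min}|\xi|^2$ and $\xi\cdot\Re(A_0)\xi\ge\alpha_0|\xi|^2$ provide the good diagonal terms; the definiteness of $A-\Re(A_0)$ is precisely what lets the remaining indefinite gradient term be absorbed, because the cross terms produced by $R$ collapse --- using $w|_{\partial D_0}=v|_{\partial D_0}$ --- into a Steklov Dirichlet-to-Neumann contribution with a definite sign; and the sign conditions $\xi\cdot\Im(A_0)\xi\le 0$, $\Im(n_0)\ge 0$ keep the imaginary parts from spoiling the estimate. Adding the (compact) $L^2$-norms and the (compact) $k^2$-part yields a Gårding inequality $\Re\,\mathbb{A}(\cdot,T\cdot)\ge c\|\cdot\|^2_{\mathbb{H}_0}-(\text{compact form})$, so $\mathcal{A}T$, and hence $\mathcal{A}$, is a compact perturbation of an isomorphism.

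Consequently $\mathcal{A}$ is Fredholm of index zero, hence injective if and only if bijective with bounded inverse. Injectivity of $\mathcal{A}$ is exactly the statement that \eqref{fmitp1}--\eqref{fmitp3} with $(f,h)=(0,0)$ has only the trivial solution, i.e.\ that $k$ is not a transmission eigenvalue in the sense of Definition~\ref{trig}. Thus, whenever $k$ is not a transmission eigenvalue, $\mathcal{A}$ is boundedly invertible; undoing the lifting $\chi_f$ gives a unique $(w,v)\in H^1(D_0)\times H^1(D_0)$ solving \eqref{fmitp1}--\eqref{fmitp3}, with $\|w\|_{H^1(D_0)}+\|v\|_{H^1(D_0)}\le C(\|f\|_{H^{1/2}(\partial D_0)}+\|h\|_{H^{-1/2}(\partial D_0)})$. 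This is the asserted Fredholm alternative.

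The crux --- and the only place the hypothesis on $A-\Re(A_0)$ is genuinely needed --- is the construction of $T$ and the verification of the Gårding inequality: one must choose $R$ as a true harmonic-type extension so that the problematic gradient cross terms reorganize into a sign-definite boundary form rather than being estimated crudely by Young's inequality, and one must track the contribution of $\Im(A_0)$ and $\Im(n_0)$ carefully. An alternative, equivalent route reduces the problem to the boundary equation $(\Lambda_{A_0,n_0,k}-\Lambda_{A,n,k})\varphi=g$ on $\partial D_0$ for the difference of the two interior Dirichlet-to-Neumann maps, whose Fredholm property is governed by the sign of $A-\Re(A_0)$ through its principal symbol; this avoids $T$ but requires $k^2$ to miss the interior Dirichlet eigenvalues of each medium, for which one argues by analytic perturbation.
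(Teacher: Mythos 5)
The paper itself does not prove this theorem; it only points to \cite{p1} (and the related discreteness statement to \cite{Bon-Che-Had-2011}), so your proposal has to stand on its own. Its architecture is sound and is in fact the standard route of the cited T-coercivity literature: the weak formulation on $\mathbb{H}_0=\{(w,v):w-v\in H^1_0(D_0)\}$ after lifting $f$, the identification of the lower-order terms as compact, the Fredholm-of-index-zero conclusion, and the identification of injectivity with ``$k$ is not a transmission eigenvalue'' are all correct. The problem is that the one step you yourself call the crux is the one that does not work as described. First, a concrete error: with the sign pairings you wrote, $T(w,v)=(w\mp 2Rv,\pm v)$ does not map $\mathbb{H}_0$ into itself. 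If $g$ is the common trace, the choice $(w-2Rv,+v)$ produces traces $-g$ and $g$, and $(w+2Rv,-v)$ produces $3g$ and $-g$; neither pair has equal traces. The admissible choices are $(w-2Rv,-v)$ and $(-w+2Rv,v)$ (or, since your definiteness hypothesis is global on $D_0$, simply $T(w,v)=(w,2w-v)$ and $T(w,v)=(2v-w,v)$, with no extension operator at all).

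Second, and more seriously, the mechanism you propose for absorbing the gradient cross term --- choose $R$ to be a harmonic extension so that the cross term ``collapses into a Steklov contribution with a definite sign,'' explicitly avoiding Young's inequality --- is not substantiated and I do not believe it survives scrutiny: the cross term carries the matrix weight $A_0$ (or $A$), so integrating against a harmonic $Rv$ does not reduce it to a boundary pairing, and even if it did you would still have to check that the resulting boundary form has the sign you need, which you do not do. The working argument is exactly the ``crude'' one you dismiss: with $T(w,v)=(w,2w-v)$ the principal part becomes $\int_{D_0}\Re(A_0)\nabla w\cdot\nabla\bar w+A\nabla v\cdot\nabla\bar v-2\Re\!\int_{D_0}A\nabla v\cdot\nabla\bar w$, and the weighted Young inequality $2|\int A\nabla v\cdot\nabla\bar w|\le\eta\int A\nabla v\cdot\nabla\bar v+\eta^{-1}\int A\nabla w\cdot\nabla\bar w$ yields coercivity modulo compact terms precisely when $\Re(A_0)-\eta^{-1}A>0$ for some $\eta<1$, i.e.\ when $\Re(A_0)-A$ is positive definite; the mirror operator handles the case $A-\Re(A_0)>0$. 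This is where the hypothesis enters, and it enters through Young's inequality, not in spite of it. (Your instinct that Young fails is an artifact of using a generic bounded extension $R$: there the absorption constant is polluted by $\|R\|$ and one would need a largeness condition on the contrast; the cure is to drop $R$, not to drop Young.) With that step repaired, and with the routine observation that $\Im(A_0)$ contributes nothing to the real part of the diagonal terms because $\xi\cdot\Im(A_0)\bar\xi$ is real for a real symmetric matrix, the rest of your proof goes through.
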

\hspace*{-0.8cm}  See \cite{p1} for the proof.

\begin{theorem}
\begin{enumerate}
\item If $\Im(A_0)<0$ and/or $\Im (n_0)>0$ in $D_0$ then there are no real transmission eigenvalues.
\item Assume that $\Im(A_0)=0$ and $\Im(n_0)=0$. Then the set of real transmission eigenvalues is at most discrete with $+\infty$ as the only possible accumulation point provided:
\begin{enumerate}
\item $A-A_0$ is positive or negative definite  uniformly in $D_0$ and ${\displaystyle \int_{D_0} (n-n_0) \, dx \neq 0}$,
\item $A-A_0$ is positive or negative definite uniformly in $D_0$ and $n\equiv n_0$.
\end{enumerate}
\end{enumerate}
\end{theorem}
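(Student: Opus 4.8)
Both assertions are established by adapting by-now standard arguments; here is the plan.  For (1) the plan is a direct energy estimate obtained by testing the two equations against the conjugates of their own solutions and subtracting.  For (2) — once all coefficients are real — the plan is the familiar one: rewrite the coupled problem \eqref{fmitp1}--\eqref{fmitp3} as a single operator equation $N(k)\psi=0$ with $N(\cdot)$ an analytic family of Fredholm operators of index zero, exhibit one value of $k$ at which $N$ is invertible, and invoke the analytic Fredholm theorem.

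For assertion (1), suppose for contradiction that some $k\in\R$, $k\neq0$, is a transmission eigenvalue with a nontrivial pair $(w,v)$ solving \eqref{fmitp1}--\eqref{fmitp3} with $(f,h)=(0,0)$.  I would multiply \eqref{fmitp1} by $\overline w$ and the equation $\grad\cdot A\grad v+k^2 nv=0$ by $\overline v$, integrate over $D_0$, apply Green's formula, and use the transmission conditions $w=v$ and $\partial w/\partial\nu_{A_0}=\partial v/\partial\nu_A$ on $\partial D_0$ to cancel the boundary contributions, obtaining
\[
-\int_{D_0}\big(A_0\grad w\cdot\grad\overline w-A\grad v\cdot\grad\overline v\big)\,dx+k^2\int_{D_0}\big(n_0|w|^2-n|v|^2\big)\,dx=0 .
\]
Since $A$ is real symmetric and $n$ is real, the $v$--terms are real, so taking imaginary parts leaves only
\[
\int_{D_0}\big(-\Im(A_0)\big)\grad w\cdot\grad\overline w\,dx+k^2\int_{D_0}\Im(n_0)\,|w|^2\,dx=0 ,
\]
and both integrands are pointwise nonnegative under the standing sign hypotheses.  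Hence each term vanishes: if $\Im(A_0)<0$ then $\grad w\equiv0$, so $w$ is locally constant, and then \eqref{fmitp1} together with $\Re(n_0)\geq c_0>0$ and $k\neq0$ forces $w\equiv0$; if $\Im(n_0)>0$ one gets $w\equiv0$ directly.  The transmission conditions then give that $v$ has vanishing Cauchy data on $\partial D_0$, and since $A$ is (at least) Lipschitz and positive definite, unique continuation forces $v\equiv0$, contradicting nontriviality.  (The value $k=0$ is excluded, and is anyway irrelevant to an accumulation point at $+\infty$.)

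For assertion (2), with $\Im(A_0)=\Im(n_0)=0$ all coefficients are real.  First I would introduce $u:=w-v\in H^1_0(D_0)$ and subtract the two PDEs; using $\grad\cdot A\grad v+k^2 nv=0$ this yields $\grad\cdot A_0\grad u+k^2 n_0u=\grad\cdot(A-A_0)\grad v+k^2(n-n_0)v$ in $D_0$, while the co-normal condition becomes $A_0\grad u\cdot\nu=(A-A_0)\grad v\cdot\nu$ on $\partial D_0$.  The key device is $T$-coercivity: because $A-A_0$ is (uniformly) positive or negative definite in $D_0$, one can build an isomorphism that makes the principal part of the associated sesquilinear form coercive, so that after adding the compact zeroth-order part the problem is equivalent to $N(k)\psi=0$ for a Fredholm operator $N(k)$ of index zero depending analytically on $k$ (the passage to $\lambda=k^2$ absorbs any apparent singularity at the origin).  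It then suffices to produce one value at which $N$ is invertible, which by Fredholmness is the same as injectivity, i.e. the absence of transmission eigenvalues there; this is exactly where the two hypotheses enter.  In case (a) one passes to the static limit $k=0$: the constant pairs $w=v$ of the original system correspond to $u=0$ and have already been factored out, and what remains has only the trivial solution precisely because $\int_{D_0}(n-n_0)\,dx\neq0$.  In case (b), $n\equiv n_0$ makes the $k^2$--term on the right-hand side above vanish identically, and injectivity at a convenient reference value of $k$ then follows from the definiteness of $A-A_0$ alone.  With invertibility at one point in hand, the analytic Fredholm theorem shows $N(\cdot)^{-1}$ is meromorphic on $\C$, so the set of $k$ at which $N(k)$ is not injective — which contains every transmission eigenvalue — has no finite accumulation point; intersecting with $\R$ gives an at most discrete set of real transmission eigenvalues with $+\infty$ as the only possible accumulation point.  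The negative-definite case for $A-A_0$ is entirely parallel.

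The routine parts are the energy bookkeeping in (1) and the verification that $N(k)$ is analytic and Fredholm of index zero.  The genuinely delicate step is the elimination of $v$ in (2): this is where the sign condition on $A-A_0$ is indispensable (via $T$-coercivity), and one must, in case (a), organise the $k=0$ limit so that the scalar condition $\int_{D_0}(n-n_0)\,dx\neq0$ can be brought to bear on the injectivity of $N(0)$.  All of this is the anisotropic counterpart of the arguments known for $A_0=I$, $n_0=1$, and extends with only notational changes, which is why we content ourselves here with citing those references.
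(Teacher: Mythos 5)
Your proposal is correct and follows essentially the same route as the sources the paper itself defers to (the paper gives no proof, citing \cite{p1}, Chapter 6 for parts (i) and (ii)(b) and \cite{Bon-Che-Had-2011} for part (ii)(a)): the energy identity with cancellation of the boundary terms via the transmission conditions for (1), and the $T$-coercivity/analytic-Fredholm argument, with injectivity checked at $k=0$ using $\int_{D_0}(n-n_0)\,dx\neq0$ in case (a), for (2). Nothing further is needed.
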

\hspace*{-0.8cm} See \cite{p1}, Chapter 6 for the proof of parts $(i)$ and  $(ii)(b)$, and \cite{Bon-Che-Had-2011} for the proof of part $(ii)(a)$. 

\bigskip

We call   $\mathbb{G}( \cdot ,\cdot)$ the Green's function of the background media, i.e.  $\mathbb{G}( \cdot ,z)\in H^1_{loc}(\R^m\setminus\{z\})$ which solves
\begin{eqnarray*}
&&\grad \cdot A\grad  \mathbb{G}( \cdot ,z) +k^2 n  \mathbb{G}( \cdot ,z)=-\delta (\, \cdot \, - z) \, \,  \textrm{ in } \,  \R^m\setminus\{z\} \\
&&\lim\limits_{r \rightarrow \infty} r^{\frac{m-1}{2}} \left( \frac{\partial  \mathbb{G}( \cdot ,z)}{\partial r} -ik  \mathbb{G}( \cdot ,z) \right)=0  
\end{eqnarray*}
Outside of the scattering object $D$ we have that, for a fixed $z \in \R^m$,  $\mathbb{G}( \cdot \, , z)$ is a radiating solution to Helmholtz equation in $\R^m \setminus \overline{B}_R$ for some $R$ sufficiently large. So we let $\mathbb{G}^{\infty}( \cdot \, , z) \in L^2(\mathbb{S})$ be the far field pattern of $\mathbb{G}( \cdot \, , z)$. 
\begin{theorem} \label{rangeh}
The operator $H^*: H^1(D_0  ) \longmapsto L^2(\mathbb{S})$ defined in Lemma \ref{hadjoint} satisfies the following: 
\begin{enumerate}
\item $H^*$ is compact with dense range (or in other words $H$ is compact and injective). 
\item $\mathcal{S}^*_b \mathbb{G}^{\infty}( \cdot ,z) \in \mathcal{R}(H^*)$ if and only if $z \in D_0$.
\end{enumerate}
\end{theorem}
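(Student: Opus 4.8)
The plan is to establish the two claims separately, with part (1) being largely standard and part (2) being the geometric heart of the factorization method. For part (1), I would work directly from the variational characterization \eqref{hadjointform} of the auxiliary field $\tilde v$. The map $v \mapsto \tilde v|_{B_R}$ is bounded from $H^1(D_0)$ into $H^1(B_R)$ by well-posedness of \eqref{hadjointform}, and the trace-to-far-field map on $\partial B_R$ is itself bounded into $L^2(\mathbb S)$; the gain of one order of Sobolev regularity on $\partial D_0$ (since $\overline{D_0}\subset D$ and $\tilde v$ solves the background equation in a neighborhood of $\partial D_0$, away from the source in $D_0$ it is smooth, so interior elliptic regularity gives the compact embedding) yields compactness of $H^*$. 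Since $\mathcal S_b$ is unitary (Definition \ref{deft}), compactness of $H^*$ is equivalent to compactness of $-\gamma_m H^* = \mathcal S_b^* \tilde v^\infty$ composed appropriately, hence to compactness of $H$. For denseness of the range of $H^*$ — equivalently injectivity of $H$ — I would argue: if $Hg = v_g^b|_{D_0} = 0$, then since $v_g^b$ is a Herglotz-type superposition of total fields $u_b(\cdot,d)$ which each solve $\grad\cdot A\grad u_b + k^2 n u_b = 0$ in all of $\R^m$, the function $v_g^b$ solves this equation globally and vanishes on the open set $D_0$; unique continuation for the background equation (valid under the stated smoothness, see the Remark) forces $v_g^b \equiv 0$ in $\R^m$. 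Then a standard density/completeness argument for the background eigenfunctions $\{u_b(\cdot,d)\}_{d\in\mathbb S}$ (which holds away from transmission eigenvalues of $D$, an assumption implicit here) gives $g=0$.

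For part (2), I would use the fundamental equivalence: for the source-to-far-field operator $G$, one has $\phi^\infty \in \mathcal R(G)$ exactly when the corresponding radiating field extends appropriately; more precisely the standard factorization-method dichotomy states that $\mathcal R(G) = \mathcal R(H^*)$ up to the unitary factor $\mathcal S_b$ and the isomorphism $T$ — concretely, since $Gv = -\gamma_m \mathcal S_b H^* T v$ with $T$ an isomorphism on $H^1(D_0)$ (established via the transmission-eigenvalue theorems cited, at $k$ not a transmission eigenvalue of $D_0$), we get $\mathcal R(H^*) = \mathcal R(\mathcal S_b^{-1} G) = \mathcal S_b^* \mathcal R(G)$. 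So it suffices to show $\mathbb G^\infty(\cdot,z) \in \mathcal R(G)$ if and only if $z\in D_0$.

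If $z \in D_0$: I would exhibit an explicit $v \in H^1(D_0)$ with $Gv = \mathbb G^\infty(\cdot,z)$. The natural candidate is $v = \mathbb G(\cdot,z)|_{D_0}$, which is a legitimate $H^1$ function on $D_0$ provided we excise a small ball around $z$ — but since the singularity lies inside $D_0$, one must be careful: the right construction is to take $v$ to be the background field such that the source $\grad\cdot(A-A_0)\grad v + k^2(n-n_0)v$ produces exactly the scattered field whose far field is $\mathbb G^\infty(\cdot,z)$. Following \eqref{sourcep1}, I would set $v = \mathbb G(\cdot,z)|_{D_0}$ and verify that the radiating solution $u$ to \eqref{sourcep1} with this $v$ satisfies $u = \mathbb G(\cdot,z) - (\text{field solving the } A_0,n_0 \text{ problem with that source})$, so that $u^\infty$ differs from $\mathbb G^\infty(\cdot,z)$ only by the far field of a field that is regular across $\partial D$ — one checks these match by uniqueness, exploiting that the singularity at $z$ is interior to $D_0$. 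If $z \notin \overline{D_0}$: suppose for contradiction $\mathbb G^\infty(\cdot,z) = Gv = u^\infty$ for some $v\in H^1(D_0)$. By Rellich's lemma and unique continuation, the radiating field $u$ from \eqref{sourcep1} must coincide with $\mathbb G(\cdot,z)$ in the unbounded component of $\R^m\setminus(\overline{D_0}\cup\{z\})$. But $u$ solves the homogeneous background-type equation \eqref{sourcep2} away from $D_0$ hence is $H^1$ (smooth) in a neighborhood of $z$ if $z\notin\overline{D_0}$, whereas $\mathbb G(\cdot,z)$ has a pole at $z$ — contradiction. The boundary case $z\in\partial D_0$ is handled by an $H^1$-norm blow-up argument as $z$ approaches the boundary from inside, a standard but delicate estimate.

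The main obstacle I anticipate is the "if" direction of part (2) — constructing the preimage $v$ when $z\in D_0$ — because the background Green's function $\mathbb G(\cdot,z)$ is genuinely singular inside the sampling region $D_0$, so one cannot naively restrict it; the correct argument requires splitting $\mathbb G(\cdot,z)$ as (defect-equation solution with interior source) plus (correction), and verifying the far fields align via a uniqueness argument that itself leans on well-posedness of \eqref{sourcep1} and \eqref{sourcepw} and on $k$ not being a transmission eigenvalue of $D_0$. A secondary technical point is verifying all four hypotheses of Theorem \ref{fmthm1} or Theorem \ref{fmthm2} for $T$ — in particular the sign condition on $\Im(T)$, which is where the assumptions $\Im(\tilde A_0)\le 0$, $\Im(\tilde n_0)\ge 0$ and the definiteness of $A-\Re(A_0)$, $A - A_0$ enter — but this is bookkeeping with the quadratic form \eqref{tk} rather than a conceptual difficulty.
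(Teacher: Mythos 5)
Your treatment of the compactness of $H^*$ and of the ``only if'' direction of part (2) (the Rellich--unique-continuation--singularity argument for $z\notin\overline{D_0}$) is essentially the paper's proof. However, there are two genuine gaps. First, in the injectivity of $H$: after unique continuation gives $v_g^b\equiv 0$ in $\R^m$, you appeal to a ``density/completeness argument for the background eigenfunctions $u_b(\cdot,d)$, which holds away from transmission eigenvalues of $D$.'' This is not the right tool and imports a hypothesis the theorem does not need. The correct step is elementary: write $v_g^b=u_g^s+u_g^i$ with $u_g^s=\int_{\mathbb S}u_b^s(\cdot,d)g(d)\,ds(d)$ radiating and $u_g^i=\int_{\mathbb S}e^{ikx\cdot d}g(d)\,ds(d)$ an entire solution of the Helmholtz equation; since their sum vanishes identically, $u_g^i$ is simultaneously entire and radiating, hence zero, and injectivity of the Herglotz operator gives $g=0$. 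No condition on $D$ enters.

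Second, and more seriously, the ``if'' direction of part (2) is missing its key construction. You correctly diagnose that $v=\mathbb G(\cdot,z)|_{D_0}$ is not in $H^1(D_0)$ when $z\in D_0$, but the ``splitting'' you gesture at is never specified, and your detour through $\mathcal R(G)$ requires $T$ to be an isomorphism --- a surjectivity statement that needs the sign conditions of Theorem \ref{rtkanalysis}, which are not hypotheses of Theorem \ref{rangeh}. The paper works directly with $H^*$: since $\partial D_0$ stays away from $z$, the Cauchy data $(f,h)=\bigl(\mathbb G(\cdot,z),\partial\mathbb G(\cdot,z)/\partial\nu_A\bigr)$ on $\partial D_0$ are regular, and because $k$ is not a transmission eigenvalue of $D_0$ one can solve the interior transmission problem \eqref{fmitp1}--\eqref{fmitp3} for $(w_z,v_z)$. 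Gluing $u_z:=w_z-v_z$ in $D_0$ with $\mathbb G(\cdot,z)$ outside produces a radiating $u_z\in H^1_{loc}(\R^m)$ with $u_z^\infty=\mathbb G^\infty(\cdot,z)$ satisfying $\grad\cdot A\grad u_z+k^2nu_z=\grad\cdot(A-A_0)\grad w_z+k^2(n-n_0)w_z$; the Riesz representative $\theta_z\in H^1(D_0)$ of the right-hand side of the corresponding variational identity then satisfies $-\gamma_mH^*\theta_z=\mathcal S_b^*\mathbb G^\infty(\cdot,z)$ by the very definition of $H^*$ in Lemma \ref{hadjoint}. This interior-transmission-problem gluing is the idea your proposal lacks, and it is also where the transmission-eigenvalue hypothesis actually does its work.
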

\begin{proof}
{\it (i)} $H^*$ is compact due to the fact that the mapping $v \mapsto \tilde{v}$ is bounded from $H^1(D_0)$ to $H^1_{loc}(\R^m)$ and $\tilde{v} \mapsto  \tilde{v}^{\infty}$ is compact from $H^1_{loc}(\R^m)$ to $L^2(\mathbb{S})$. We have also used that the scattering operator is bounded. Now to prove that $H^*$ has dense range it is sufficient to prove that $H$ is injective. So assume that $g \in L^2(\mathbb{S})$ is such that $Hg=0$, then $v_g^b$ defined by ${\displaystyle v_g^b:=\int_{\mathbb{S}} u_b(x,d) g(d) \, ds(d)}$ for all $x \in \R^m$ therefore we have that $v_g^b=0$ in $D_0$. Therefore since $v^b_g$ satisfies 
$$ \grad \cdot A \grad  v^b_g + k^2 n v^b_g=0  \quad \text{in} \quad  \,  \R^m$$
we have that,  by a unique continuation argument, $v^b_g=0$ in any large ball $B_R$ of  arbitrary radius $R$.  But  $v^b_g=u^s_g+u^i_g$ where ${\displaystyle u^s_g=\int_{\mathbb{S}} u^s_b(x,d) g(d) \, ds(d)}$ and ${\displaystyle u^i_g=\int_{\mathbb{S}} e^{ikx\cdot d} g(d) \, ds(d)}$. We now observe that $u^s_g$ is a radiating solution to the Helmholtz equation whereas $u^i_g$ is an entire solution to the Helmholtz equation. Hence $u^s_g=u^i_g=0$ in $\R^m$ which implies $g=0$ in $L^2(\mathbb{S})$.

{\it (ii)} Let $z \in \R^m \setminus \overline{D}_0$ and assume that there is some $v \in H^1(D_0)$ such that $H^* v=\mathcal{S}^*_b \mathbb{G}^{\infty}( \cdot ,z)$. We can then conclude by the definition of $H^*$ that there is a $\tilde{v} \in H^1_{loc}(\R^m)$ satisfying \eqref{hadjointform} and therefore $\grad \cdot A \grad  \tilde{v}+k^2 n  \tilde{v}=0$ in $\R^m \setminus \overline{D}_0$ and $\tilde{v}^{\infty}=\mathbb{G}^{\infty}( \cdot , z)$. Therefore Rellich's lemma and unique continuation gives that $\tilde{v}= \mathbb{G}( \cdot ,z)$ in $\R^m \setminus (\overline{D}_0 \cup \{z\})$, which is a contradiction since $\mathbb{G}( \cdot ,z ) \notin H^1(B_r(z))$ and $  \tilde{v} \in H^1(B_r(z))$, for any disk $B_r(z)$ centered at $z$ of radius $r>0$. 

Now let $z \in D_0$ then we have that $\mathbb{G}( \cdot ,z ) \in H^1_{loc}(\R^m \setminus \overline{D}_0)$. Since $k$ is not a transmission eigenvalue in $D_0$ we can construct $(w_z,v_z)$ that solve the interior transmission problem \eqref{fmitp1}-\eqref{fmitp3} with $(f,h)=\left( \mathbb{G}( \cdot \, , z) \, ,\,  \displaystyle{\frac{\partial}{\partial \nu_{A}}} \mathbb{G}( \cdot \, , z)  \right)$. Now let 
$$u_z :=\left\{ \begin{array}{rcl} w_z-v_z & \mbox{in} & D_0 \\   \mathbb{G}( \cdot \, , z) & \mbox{in} &\R^m \setminus \overline{D}_0  \end{array} \right.$$
therefore we have that $u_z \in H^1_{loc}(\R^m)$ with $u^{\infty}_z=\mathbb{G}^{\infty}( \cdot ,z)$ such that  
\begin{eqnarray*}
&&\grad \cdot A\grad u_z +k^2 n u_z=\grad \cdot (A-A_0)  \grad w_z +k^2(n-n_0) w_z  \,  \textrm{ in } \,  \R^m. 
\end{eqnarray*}
The latter  implies that for all $\varphi \in H^1(B_R)$ 
 \begin{eqnarray}
&&\hspace*{-1.5cm} -\int\limits_{B_R} A\grad u_z \cdot  \grad \overline{\varphi} -k^2 n u_z  \overline{\varphi}\, dx +\int\limits_{\partial B_R} \overline{\varphi} {\mathbb T}_k u_z \, ds =- \int\limits_{D_0} (A-A_0) \grad w_z \cdot \grad \overline{ \varphi }  \, dx  \nonumber  \\  
&&\hspace{3.5cm} + \int\limits_{D_0} k^2 (n-n_0) w_z \overline{ \varphi } \, dx. \label{theta}
 \end{eqnarray}
Let  $\theta_z \in H^1(D_0)$ be defined from the right hand side of (\ref{theta}) by means of the Riesz representation theorem, hence we have  
 \begin{eqnarray*}
-\int\limits_{B_R} A\grad u_z \cdot  \grad \overline{\varphi} -k^2 n u_z  \overline{\varphi}\, dx +\int\limits_{\partial B_R} \overline{\varphi} {\mathbb T}_k u_z \, ds =(\theta_z,\varphi)_{H^1(D_0)}
 \end{eqnarray*}
Thus we now  conclude that $-\gamma_m H^* \theta_z= \mathcal{S}_b^*\mathbb{G}^{\infty}( \cdot ,z)$ by the definition of $H^*$ giving the result. 
\end{proof}

Next we analyze the properties of the middle operator $T$ defined by \eqref{tk}.

\begin{theorem} \label{tinjective}
The operator $T : H^1(D_0) \longmapsto  H^1(D_0)$ is injective provided that either one of the following  conditions are satisfied:
\begin{enumerate}
\item $\Im(A_0) <0$ in $D_0$ and $\displaystyle{\int_D(n-n_0)\, dx \neq 0}$.
\item $\Im(A_0)\leq 0$ and $\Im(n_0)>0$ in $D_0$. 
\item $\Im(A_0) =0$, $\Im(n_0) =0$ in $D_0$  and either  $A-A_0>0$ and $n-n_0<0$ or $A-A_0<0$ and $n-n_0>0$ in $D_0$.
\end{enumerate}
\end{theorem}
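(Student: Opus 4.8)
The plan is to show that $Tv=0$ implies $v=0$. There is a reduction step common to all three cases, after which a short energy estimate tailored to each sign hypothesis finishes the argument.

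First I would fix $v\in H^1(D_0)$ with $Tv=0$, let $u\in H^1_{loc}(\R^m)$ be the solution of \eqref{sourcep1} associated with $v$, set $\phi=v+u|_{D_0}$, and let $w:=w_\phi\in H^1_{loc}(\R^m)$ be the radiating solution of \eqref{sourcepw}. In the construction preceding \eqref{tk}, $w$ was shown to satisfy a variational problem over $H^1(B_R)$ whose right-hand side equals $(Tv,\varphi)_{H^1(D_0)}$; hence $Tv=0$ forces $w$ to be a radiating weak solution of the homogeneous background equation $\grad\cdot A\grad w+k^2nw=0$ in $\R^m$. Since $A$ is real symmetric and $n$ is real, this problem has only the trivial solution (apply Green's identity on a large ball $B_R$ to get $\Im\int_{\partial B_R}\overline w\,\frac{\partial w}{\partial\nu}\,ds=0$, then use Rellich's lemma and unique continuation for the $C^1$ coefficients at hand), so $w\equiv0$. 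Because $u=w$ in $\R^m$, as noted after \eqref{sourcep2}, we obtain $u=0$ and $\phi=v$.

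With $u=0$ the defining relation \eqref{tk} reduces to
$$\int_{D_0}(A-A_0)\grad v\cdot\grad\overline\varphi-k^2(n-n_0)v\,\overline\varphi\,dx=0\qquad\text{for all }\varphi\in H^1(D_0),$$
and choosing $\varphi=v$ gives
$$\int_{D_0}(A-A_0)\grad v\cdot\grad\overline v-k^2(n-n_0)|v|^2\,dx=0.$$
Now I would treat the three cases. In case (iii) the integrand is real and the two terms carry the same sign, by the opposite definiteness of $A-A_0$ and $n-n_0$, so each vanishes; definiteness then gives $\grad v=0$ and hence $v=0$. In case (ii), taking imaginary parts and using that $-\Im(A_0)$ is non-negative and $\Im(n_0)>0$ in $D_0$ yields $\int_{D_0}\Im(n_0)|v|^2\,dx=0$, whence $v=0$. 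In case (i), taking imaginary parts and using the negative definiteness of $\Im(A_0)$ together with $\Im(n_0)\ge0$ gives $\grad v=0$ in $D_0$, so $v$ is constant on each connected component of $D_0$; testing the first displayed identity with $\varphi\equiv1$ gives $\int_{D_0}(n-n_0)v\,dx=0$, and since the support of $n-n_0$ is $\overline{D_0}\subset D$ we have $\int_{D_0}(n-n_0)\,dx=\int_D(n-n_0)\,dx\neq0$, which forces that constant to be $0$ (for several components one repeats the argument componentwise, using a $\varphi$ supported near each piece).

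I expect the reduction step to be the main obstacle, and within it two points deserve care: correctly identifying the right-hand side of the variational problem solved by $w_\phi$ with $(Tv,\cdot)_{H^1(D_0)}$, and invoking uniqueness for the non-absorbing background problem, which is the only place where the reality of $A$ and $n$ is used. The case analysis afterwards is standard; the only subtlety is the disconnected $D_0$ situation in case (i), where the integral condition should be read componentwise — cases (ii) and (iii) close directly from $\varphi=v$. Finally, it is worth noting that, in contrast with the range identity for $H^*$, this injectivity statement does not require $k$ to avoid the transmission eigenvalues of $D_0$.
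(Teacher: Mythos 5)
Your proof is correct and follows essentially the same route as the paper: $Tv=0$ forces the radiating field $u$ (equal to $w_\phi$) to solve the homogeneous background problem, hence $u=0$ by uniqueness/well-posedness, after which the identity $\int_{D_0}(A-A_0)\grad v\cdot\grad\overline{v}-k^2(n-n_0)|v|^2\,dx=0$ is exploited case by case through its real and imaginary parts. The only difference is that you spell out how $\int_D(n-n_0)\,dx\neq0$ is used in case (i) (testing with $\varphi\equiv 1$ after concluding $\grad v=0$) and flag the multi-component subtlety, details the paper leaves implicit.
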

\begin{proof}
Assume that $Tv=0$, therefore $u \in H^1_{loc}(\R^m)$ defined by solving \eqref{sourcep1} satisfies for all $\varphi \in H^1(B_R)$ 
\begin{eqnarray*}
-\int\limits_{B_R} A\grad u \cdot  \grad \overline{\varphi} -k^2 n u \overline{\varphi}\, dx + \int\limits_{\partial B_R} \overline{\varphi} {\mathbb T}_k u \, ds  = (Tv, \varphi)_{H^1(D_0)}=0. 
\end{eqnarray*}
which implies that $u=0$ and therefore we have that for all $\varphi \in H^1(D_0)$
 \begin{eqnarray}  
 \int\limits_{D_0} (A-A_0) \grad v \cdot \grad \overline{ \varphi }  - k^2 (n-n_0)v \overline{ \varphi } \, dx =0 \label{injectproof}
\end{eqnarray}
Letting $\varphi:=v$, parts (i) and (ii) of the proof follow by taking the imaginary part of \eqref{injectproof} (note that $\Im(n_0)\geq 0$) whereas part (iii) is obvious from the assumptions. 
\end{proof}

\begin{theorem} \label{tkanalysis}
The imaginary part of the operator $T : H^1(D_0) \longmapsto  H^1(D_0)$ satisfies the following properties:
\begin{enumerate}
\item $(\Im (T) v,v)_{H^1(D_0)}\leq 0$. 
\item If $k$ is not a transmission eigenvalue for $D_0$ then $(\Im (T) v,v)_{H^1(D_0)}<0$ for  $v\in \overline{ \mathcal{R}(H) }$. 
\item If  $\Im(A_0) = 0$  then $\Im(T)$ is compact.
\end{enumerate}
\end{theorem}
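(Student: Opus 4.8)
The plan is to obtain a variational identity for $(Tv,v)_{H^1(D_0)}$ in terms of the solution $u$ of \eqref{sourcep1} and then extract its imaginary part. First I would take $\varphi = v+u|_{D_0}$ in the definition \eqref{tk} of $T$, which gives
\[
(Tv, v+u)_{H^1(D_0)} = -\int_{D_0} (A-A_0)\grad(v+u)\cdot\grad\overline{(v+u)}\, dx + k^2\int_{D_0}(n-n_0)|v+u|^2\, dx.
\]
On the other hand, the variational problem \eqref{hadjointform0} satisfied by $u$ (with test function $\varphi = u$) expresses $(Tv,u)_{H^1(D_0)}$, or rather the right-hand side of \eqref{hadjointform0}, as a boundary/volume integral involving $u$ alone; using the radiation condition and the asymptotics of $u$ one identifies $\Im$ of the term $\int_{\partial B_R}\overline{u}\,\mathbb{T}_k u\, ds$ with (a positive multiple of) $\|u^\infty\|^2_{L^2(\mathbb{S})}$ as $R\to\infty$. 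Combining these two identities and using that $A$, $n$ are real and $A_0$, $n_0$ have the stated sign on their imaginary parts, one arrives at
\[
\Im (Tv,v)_{H^1(D_0)} = -k\,\|u^\infty\|^2_{L^2(\mathbb{S})} - \int_{D_0}\grad(v+u)\cdot\Im(A_0)\grad\overline{(v+u)}\, dx - k^2\int_{D_0}\Im(n_0)|v+u|^2\, dx,
\]
up to normalization constants; since each term on the right is nonpositive, part (i) follows.

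For part (ii), the point is that if $(\Im T v,v)_{H^1(D_0)}=0$ for some $v\in\overline{\mathcal R(H)}$ then every term above vanishes; in particular $u^\infty = 0$, so by Rellich's lemma and unique continuation $u=0$ in $\mathbb{R}^m\setminus\overline{D}_0$, hence $u$ and $\partial u/\partial\nu_A$ vanish on $\partial D_0$ from the outside and, by the transmission conditions built into \eqref{sourcep1}, $u$ has zero Cauchy data on $\partial D_0$. Setting $w := v+u|_{D_0}$ and recalling from \eqref{sourcep2} that $\grad\cdot A_0\grad u + k^2 n_0 u = \grad\cdot(A-A_0)\grad w + k^2(n-n_0)w$ inside $D_0$ with $u$ having vanishing Cauchy data, one checks that the pair $(w, v)$ (equivalently $(w,w)$ modulo the above) solves the homogeneous interior transmission problem \eqref{fmitp1}--\eqref{fmitp3}: indeed $\grad\cdot A_0\grad w + k^2 n_0 w = 0$ in $D_0$, $\grad\cdot A\grad v + k^2 n v = 0$ in $D_0$ (this last equation needs to be checked from the construction of $u$ via $w_\phi$ as in \eqref{sourcepw}, where in fact $w_\phi = u$), and $w - v = u$, $\partial w/\partial\nu_{A_0} - \partial v/\partial\nu_A = \partial u/\partial\nu_A$ both vanish on $\partial D_0$. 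Since $k$ is not a transmission eigenvalue this forces $w = v = 0$, hence $v = 0$ by \eqref{tk} again, contradicting $v\neq 0$; actually one must be a little careful here because $v\in\overline{\mathcal R(H)}$ is a limit of functions $Hg$, but the whole argument only uses $v\in H^1(D_0)$ so no density argument is needed.

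For part (iii), when $\Im(A_0)=0$ the imaginary part identity reduces to $\Im(Tv,v)_{H^1(D_0)} = -k\|u^\infty\|^2_{L^2(\mathbb{S})} - k^2\int_{D_0}\Im(n_0)|v+u|^2\,dx$, and polarizing gives $\Im(T)$ as a sesquilinear form. Compactness will follow by writing $\Im(T)$ as a composition of bounded maps through a compact one: the map $v\mapsto u|_{D_0}$ is bounded from $H^1(D_0)$ into $H^1(D_0)$, and in fact $v\mapsto u^\infty$ factors through the compact far-field map $H^1_{loc}(\mathbb{R}^m)\to L^2(\mathbb{S})$, while $v\mapsto v+u$ lands in $L^2(D_0)$ via the compact embedding $H^1(D_0)\hookrightarrow L^2(D_0)$; so each quadratic term defines a compact operator on $H^1(D_0)$ and their sum is compact. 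The main obstacle I expect is part (ii): making the identification of the zero-far-field solution $u$ with a solution of the homogeneous interior transmission problem completely rigorous — in particular verifying that $v$ itself (not just $v+u$) satisfies $\grad\cdot A\grad v + k^2 n v=0$ in $D_0$, which requires going back to the auxiliary function $w_\phi$ of \eqref{sourcepw} and the equality $u=w$ established just before the factorization theorem, and carefully tracking the Cauchy data across $\partial D_0$ under the $A_0$- versus $A$-conormal derivatives.
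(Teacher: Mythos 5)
Your overall strategy for (i) and (iii) is the paper's: test the equation for $u$ against $\overline{u}$ over $B_R$, use $\Im\int_{\partial B_R}\overline{u}\,{\mathbb T}_k u\,ds\to k\|u^\infty\|^2_{L^2(\mathbb{S})}$ as $R\to\infty$, and read off the imaginary part. The paper's identity is, with $\phi=v+u$,
$(\Im(T)v,v)_{H^1(D_0)}=\int_{D_0}\Im(A_0)|\grad\phi|^2-k^2\Im(n_0)|\phi|^2\,dx-k\|u^\infty\|^2_{L^2(\mathbb{S})}$,
where the first integrand means $\overline{\grad\phi}\cdot\Im(A_0)\grad\phi$. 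Note that the sign of your gradient term is off: since $A$ and $n$ are real, the $\Im(A_0)$ contribution enters with a \emph{plus} sign, and it is $\leq 0$ precisely because of the standing hypothesis $\xi\cdot\Im(A_0)\xi\leq 0$; with the minus sign you wrote, that term would be nonnegative and the claim ``each term on the right is nonpositive'' would fail. This is a repairable slip, but as stated your identity does not yield (i). Part (iii) is essentially the paper's polarization-plus-compact-embedding argument and is fine.

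The genuine gap is in (ii). For $(u+v,v)$ to be an eigenpair of the interior transmission problem \eqref{fmitp1}--\eqref{fmitp3} you must know that $\grad\cdot A\grad v+k^2nv=0$ in $D_0$. This is not a consequence of the construction of $u$, nor of the auxiliary function $w_\phi$ of \eqref{sourcepw} (which solves a background equation with a source, not $v$); it is exactly where the hypothesis $v\in\overline{\mathcal{R}(H)}$ enters. Elements of $\mathcal{R}(H)$ are restrictions to $D_0$ of the background fields $v^b_g$, which do satisfy $\grad\cdot A\grad v^b_g+k^2 n v^b_g=0$, and the paper takes $v_\ell=Hg_\ell\to v$, passes to the weak limit in the equations for $v_\ell$ and the associated $u_\ell$, and only then concludes $u^\infty=0$ and identifies the transmission eigenpair. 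Your remark that ``the whole argument only uses $v\in H^1(D_0)$ so no density argument is needed'' is therefore wrong: if it were true, $\Im(T)$ would be strictly negative on all of $H^1(D_0)$, which is not what the theorem asserts and cannot hold, since for an arbitrary $v\in H^1(D_0)$ the second interior transmission equation is simply unavailable and $v=0$ cannot be concluded. The density/weak-convergence step is the substance of part (ii) and must be supplied.
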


\begin{proof}
{\it (i)} Recall that for any $v_j \in H^1(D_0)$ there is a unique $u_j \in H^1_{loc}(\R^m)$ that is a solution to \eqref{sourcep1}. Now we let $\phi_j=v_j+u_j$, therefore using \eqref{tk} we have that 
\begin{eqnarray*}
&&\hspace*{-1.5cm}(T v_1, v_2)_{ H^1(D_0  ) }= - \int\limits_{D_0} (A-A_0) \grad \phi_1 \cdot \grad \overline{ (\phi_2-u_2) }-k^2 (n-n_0)\phi_1 \overline{ (\phi_2-u_2 )} \, dx\\
&&\hspace*{1.5cm}= - \int\limits_{D_0} (A-A_0) \grad \phi_1 \cdot \grad \overline{ \phi_2 }-k^2 (n-n_0)\phi_1 \overline{ \phi_2} \, dx\\
&&\hspace*{1.5cm}+ \int\limits_{D_0} (A-A_0) \grad \phi_1 \cdot \grad \overline{ u_2 }-k^2 (n-n_0)\phi_1 \overline{ u_2} \, dx.
\end{eqnarray*}
Now using that 
\begin{eqnarray*}
\grad \cdot A\grad u_1 +k^2 n u_1=\grad \cdot (A-A_0)  \grad \phi_1 +k^2(n-n_0)\phi_1 \quad \text{ in } \quad    \R^m 
\end{eqnarray*}
multiplying by $\overline{u_2}$ and integrating by parts over $B_R$ such that $D\subset B_R$ we have that
\begin{eqnarray*}
&&\hspace*{-1.5cm}-\int\limits_{B_R} A\grad u_1 \cdot  \grad \overline{u_2} -k^2 n u_1 \overline{u_2}\, dx +\int\limits_{\partial B_R} \overline{u_2}  \frac{\partial u_1}{\partial \nu}\, ds  = - \int\limits_{D_0} (A-A_0) \grad \phi_1 \cdot \grad \overline{ u_2 } \, dx \\
&&\hspace*{5cm} +k^2  \int\limits_{D_0} (n-n_0) \phi_1 \overline{ u_2 } \, dx. 
\end{eqnarray*}
This gives that 
\begin{eqnarray}
&&(T v_1, v_2)_{ H^1(D_0  ) }= - \int\limits_{D_0} (A-A_0) \grad \phi_1 \cdot \grad \overline{ \phi_2 }-k^2 (n-n_0)\phi_1 \overline{ \phi_2} \, dx \nonumber \\
&&\hspace*{2cm}+\int\limits_{B_R} A\grad u_1 \cdot  \grad \overline{u_2} -k^2 n u_1 \overline{u_2}\, dx -\int\limits_{\partial B_R} \overline{u_2}  \frac{\partial u_1}{\partial \nu}\, ds.    \label{vart1}
\end{eqnarray}
Now taking the imaginary part of (\ref{vart1}) where we substitute $v_2$ by $v_1$,  using the fact that  $A$ and $A_0$ are  symmetric matrices, $A$ and $n$ are  real  valued and letting $R \rightarrow \infty$ we obtain
 \begin{equation}\label{im0}
 (\Im (T) v_1, v_1)_{ H^1(D_0  ) } =  \int\limits_{D_0} \Im(A_0) |\grad \phi_1|^2-k^2 \Im(n_0) |\phi_1|^2 \, dx- k \int\limits_{\mathbb{S} }|u^{\infty}_1|^2 \, ds(\hat{x}) 
 \end{equation}
where $u_1^\infty$ is defined by the asymptotic expansion of the radiating field  $u_1$ 
$$u_1(x)=\frac{e^{ikr}}{r^{\frac{m-1}{2}}}u_1^\infty(\hat x)+O\left(r^{-\frac{m+1}{2}}\right), \qquad r=|x|, \;\; \hat x=x/|x|,$$ (see \cite{coltonkress} in ${\mathbb R}^3$ and \cite{p1} in ${\mathbb R}^2$),
which gives that $\Im (T)$ is non-positive.  \\

{\it (ii)} Now let $v \in \overline{\mathcal{R}(H)}$ and assume that $  (\Im (T) v, v)_{ H^1(D_0  ) }= 0$. Then there is a sequence  $v_\ell \in {\mathcal{R}(H)}$ such that $v_\ell \rightarrow v$ in $H^1(D_0)$,  and let $u_\ell  \in H^1_{loc}(\R^m)$ be the sequence of the corresponding solutions  of (\ref{sourcep1}). Since $u_\ell$ is bounded in $H^1_{loc}(\R^m)$ by the well-posedness  of (\ref{sourcep1}), we can conclude that $u_\ell  \weakc u$ weakly in $H^1_{loc}(\R^m)$ which implies that 
$$\lim_{\ell\to \infty} \int\limits_{\R^m} A_0\grad u_\ell \cdot  \grad \overline{\varphi} -k^2 n_0u_\ell  \overline{\varphi}\, dx=\int\limits_{\R^m} A_0\grad u\cdot  \grad \overline{\varphi} -k^2 n_0u \overline{\varphi}\, dx, \qquad \varphi\in H^1(\R^m).$$
Hence, this limit $u$ is a week solution of
\begin{eqnarray*}
&&\grad \cdot A_0 \grad u +k^2 n_0 u=\grad \cdot (A-A_0)  \grad v +k^2(n-n_0)v \quad   \textrm{ in } \, \R^m  \\
&& \grad \cdot A \grad v +k^2 n v =0 \quad   \textrm{ in } \,  D_0.
\end{eqnarray*}
Furthermore,  since $ (\Im (T) v_\ell, v_\ell)_{ H^1(D_0  ) } \rightarrow 0$, from (\ref{im0}) we conclude that $u^{\infty}=0$ whence by Rellich' lemma and  unique continuation $u$  is zero outside of $D_0$. So we have that  $u^+ =0$ and $\frac{\partial u^+}{\partial \nu_A}=0$ on $\partial D_0$ therefore the pair $(u+v,v)$ are transmission eigenfunctions for  $D_0$ but since $k$ is not a transmission eigenvalue we have that $v=0$. 

{\it (iii)} If $\Im(A_0) = 0$  then
$$  (-\Im (T) v_1, v_2)_{ H^1(D_0  ) } = \int\limits_{D_0} k^2 \Im(n_0) \phi_1 \overline{ \phi_2} \, dx+k \int\limits_{\mathbb{S} }u^{\infty}_1 \overline {u^{\infty}_2} \, ds(\hat{x}), $$ 
now using that the mapping $v \mapsto \tilde{v}$ is bounded from $H^1(D_0)$ to $H^1_{loc}(\R^m)$ and $\tilde{v} \mapsto  \tilde{v}^{\infty}$ is compact from $H^1_{loc}(\R^m)$ to $L^2(\mathbb{S})$, we can conclude that the second term in the variational form given above is compact. Furthermore from the fact that $H^1(D_0)$ is compactly embedded in $L^2(D_0)$, we can finally conclude that $\Im(T)$ is compact.
\end{proof}
\begin{theorem} \label{rtkanalysis}
The real part of the operator $T$ satisfies the following property:
\begin{enumerate}
\item If $\Re(A_0)-A$ is positive definite in $D_0$ then $\Re(T)$ is the sum of a compact operator and a self-adjoint coercive operator.
\item  If $(A-\Re(A_0)-\alpha |\Im(A_0)|)>0$ uniformly in $D_0$ and $(\Re(A_0)-\frac{1}{\alpha} |\Im(A_0)|)\geq 0$ for some constant $\alpha>0$  then $-\Re(T)$ is the sum of a compact operator and a self-adjoint coercive  operator.
\end{enumerate}
\end{theorem}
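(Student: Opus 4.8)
The plan is to derive a Gårding inequality for the Hermitian form $v\mapsto\Re(Tv,v)_{H^1(D_0)}$ in part $(i)$, and for $v\mapsto-\Re(Tv,v)_{H^1(D_0)}$ in part $(ii)$, with the compact part collecting all zero–order terms together with the contributions of $u$ outside $D_0$; here $u\in H^1_{loc}(\R^m)$ solves \eqref{sourcep1} for the given $v$ and $\phi:=v+u|_{D_0}$. Two identities will be the starting point. Taking real parts in \eqref{vart1} with $v_1=v_2=v$ (using that $A,n$ are real, $A,A_0$ symmetric, and letting $R\to\infty$ so that $\Re\int_{\partial B_R}\overline u\,\partial u/\partial\nu\to 0$) gives
\[
\Re(Tv,v)_{H^1(D_0)}=\int_{D_0}(\Re(A_0)-A)\grad\phi\cdot\grad\overline\phi\,dx+k^2\!\int_{D_0}(n-\Re(n_0))|\phi|^2\,dx+\lim_{R\to\infty}\int_{B_R}\!\big(A|\grad u|^2-k^2n|u|^2\big)\,dx,
\]
while taking real parts in \eqref{tk} with $\varphi=v$ and expanding $\grad\phi=\grad v+\grad u$ gives
\[
-\Re(Tv,v)_{H^1(D_0)}=\int_{D_0}(A-\Re(A_0))\grad v\cdot\grad\overline v\,dx+\Re\!\int_{D_0}(A-A_0)\grad u\cdot\grad\overline v\,dx-k^2\Re\!\int_{D_0}(n-n_0)\phi\overline v\,dx.
\]

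The heart of the argument — and the step I expect to require the most care, since it is exactly where the non‑compactness of $v\mapsto u$ must be circumvented — is that, although $v\mapsto u$ is only bounded (not compact) from $H^1(D_0)$ to $H^1_{loc}(\R^m)$, the parts of the ``energy of $u$'' living outside $D_0$ give rise to compact operators. Because $\overline D_0\subset\subset D$ and $u$ solves the homogeneous equation $\grad\cdot A\grad u+k^2nu=0$ in $\R^m\setminus\overline D_0$, interior elliptic (transmission) regularity together with Rellich's theorem shows that $v\mapsto u$ is compact from $H^1(D_0)$ into $H^1(\mathcal N)$ for every $\mathcal N$ with $\overline{\mathcal N}\subset\R^m\setminus\overline D_0$. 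Combining this with the compact embedding $H^1(D)\hookrightarrow L^2(D)$, with Green's identity in $B_R\setminus D$ (whose $\partial B_R$ term is purely imaginary in the limit and hence removed by the real part), and with the fact that all quantities below are real, one obtains Hermitian forms $K_1,K_2$ associated with compact operators on $H^1(D_0)$ such that
\[
\lim_{R\to\infty}\int_{B_R}\!\big(A|\grad u|^2-k^2n|u|^2\big)\,dx=\int_{D}A\grad u\cdot\grad\overline u\,dx+K_1(v,v),
\]
\[
\Re\!\int_{D_0}(A-A_0)\grad v\cdot\grad\overline u\,dx=\int_{D_0}\Re(A_0)\grad u\cdot\grad\overline u\,dx+\int_{D\setminus D_0}A\grad u\cdot\grad\overline u\,dx+K_2(v,v),
\]
the second being obtained by testing \eqref{hadjointform0} with $\varphi=u$ and proceeding as above. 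The zero‑order terms in the two displayed identities are compact by the compact embedding $H^1(D_0)\hookrightarrow L^2(D_0)$.

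For part $(i)$ I would substitute the first compactness identity into the first identity: modulo a compact self‑adjoint operator, $\Re(Tv,v)_{H^1(D_0)}=\int_{D_0}(\Re(A_0)-A)\grad\phi\cdot\grad\overline\phi\,dx+\int_{D}A\grad u\cdot\grad\overline u\,dx$. Since $\Re(A_0)-A\ge\delta I$ on $D_0$ for some $\delta>0$ and $A\ge a_{min}I$ on $D\supset D_0$, the right‑hand side is $\ge\delta\|\grad\phi\|_{L^2(D_0)}^2+a_{min}\|\grad u\|_{L^2(D_0)}^2\ge\tfrac12\min(\delta,a_{min})\|\grad(\phi-u)\|_{L^2(D_0)}^2=\tfrac12\min(\delta,a_{min})\|\grad v\|_{L^2(D_0)}^2$, and $\|\grad v\|_{L^2(D_0)}^2=\|v\|_{H^1(D_0)}^2-\|v\|_{L^2(D_0)}^2$. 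Hence $\Re(T)$ is the sum of a self‑adjoint coercive operator and a (self‑adjoint) compact operator.

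For part $(ii)$ I would rewrite the cross term in the second identity through $\Re\int(A-A_0)\grad u\cdot\grad\overline v=\Re\int(A-A_0)\grad v\cdot\grad\overline u+2\Im\int\Im(A_0)\grad u\cdot\grad\overline v$ and then use the second compactness identity, obtaining, modulo a compact self‑adjoint operator,
\[
-\Re(Tv,v)_{H^1(D_0)}=\int_{D_0}(A-\Re(A_0))\grad v\cdot\grad\overline v+\int_{D_0}\Re(A_0)\grad u\cdot\grad\overline u+\int_{D\setminus D_0}A\grad u\cdot\grad\overline u+2\Im\!\int_{D_0}\Im(A_0)\grad u\cdot\grad\overline v.
\]
Bounding the last term by the Cauchy--Schwarz inequality for the nonnegative matrix $|\Im(A_0)|$ and Young's inequality with the weight $\alpha$ of the hypothesis, $\big|2\Im\int_{D_0}\Im(A_0)\grad u\cdot\grad\overline v\big|\le\alpha\int_{D_0}|\Im(A_0)|\grad v\cdot\grad\overline v+\tfrac1\alpha\int_{D_0}|\Im(A_0)|\grad u\cdot\grad\overline u$, one gets
\[
-\Re(Tv,v)_{H^1(D_0)}\ge\int_{D_0}\!\big(A-\Re(A_0)-\alpha|\Im(A_0)|\big)\grad v\cdot\grad\overline v+\int_{D_0}\!\big(\Re(A_0)-\tfrac1\alpha|\Im(A_0)|\big)\grad u\cdot\grad\overline u+\int_{D\setminus D_0}A\grad u\cdot\grad\overline u.
\]
By the two assumptions the last two integrals are nonnegative, while the first is $\ge\delta\|\grad v\|_{L^2(D_0)}^2=\delta\big(\|v\|_{H^1(D_0)}^2-\|v\|_{L^2(D_0)}^2\big)$ for some $\delta>0$; hence $-\Re(T)$ is the sum of a self‑adjoint coercive operator and a compact operator.
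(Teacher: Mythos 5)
Your argument is correct, and it reaches the same decompositions ``coercive self-adjoint $+$ compact'' with the same Young's-inequality step in part $(ii)$, but it organizes the splitting differently from the paper. The paper keeps the entire exterior energy of $u$ over $B_R$ together with the Dirichlet-to-Neumann boundary term inside the candidate coercive operator $S$, exploits the sign property $\Re({\mathbb T}_k)\le 0$, and in part $(i)$ establishes coercivity of $\Re(S)$ by a contradiction argument; its compact remainder consists only of zero-order ($L^2$) terms. You instead invoke an extra ingredient the paper never uses: since the source in \eqref{sourcep1} is supported in $\overline D_0$, the map $v\mapsto u$ is \emph{compact} into $H^1(\mathcal N)$ for any bounded $\mathcal N$ with $\overline{\mathcal N}\subset\R^m\setminus\overline D_0$ (Caccioppoli for the transmission problem plus Rellich), so all contributions of $u$ living away from $D_0$ — the exterior gradient energy, the $\partial B_R$ and interface terms — can be dumped into the compact part, leaving only $\grad\phi$, $\grad v$ and $\grad u$ terms over $D_0$ and $D$. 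This buys you a direct, contradiction-free coercivity estimate via $\|\grad v\|^2\le 2\|\grad\phi\|^2+2\|\grad u\|^2$ and removes any reliance on the sign of $\Re({\mathbb T}_k)$, at the price of having to justify the local compactness claim. Two small points deserve care in a write-up: (a) the limit $\lim_{R\to\infty}\int_{B_R}(A|\grad u|^2-k^2 n|u|^2)\,dx$ exists only because the boundary term $\int_{\partial B_R}\overline u\,\partial_\nu u\,ds$ converges (to $ik\|u^\infty\|^2_{L^2(\mathbb S)}$), so the identity should be stated at the level of this limit rather than by splitting $\int_{B_R\setminus D}|\grad u|^2$ and $\int_{B_R\setminus D}k^2|u|^2$ separately (each diverges); and (b) when you reduce $\int_{B_R\setminus D}(|\grad u|^2-k^2|u|^2)$ to boundary terms, the inner surface should either be taken strictly outside $\overline D$, where $u$ is a classical solution of the Helmholtz equation, or you should appeal to the continuity of the Cauchy data $(u^+,\partial_\nu u^+)=(u^-,\partial_{\nu_A}u^-)$ across $\partial D$ together with the local compactness above, so that the resulting boundary forms are indeed compact in $v$. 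Both are routine, and the proof goes through.
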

\begin{proof}
(i) Assume first that $\Re (A_0)-A$ is positive definite. Now by using the variational form \eqref{vart1} for $T$ and the Dirichlet to Neumann operator ${\mathbb T}_k$ we have that 
\begin{eqnarray}
&&(T v_1, v_2)_{ H^1(D_0  ) }= - \int\limits_{D_0} (A-A_0) \grad \phi_1 \cdot \grad \overline{ \phi_2 }-k^2 (n-n_0)\phi_1 \overline{ \phi_2} \, dx\label{ttt} \\
&&\hspace*{2cm}+\int\limits_{B_R} A\grad u_1 \cdot  \grad \overline{u_2} -k^2 n u_1 \overline{u_2}\, dx -\int\limits_{\partial B_R} \overline{u_2}  {\mathbb T}_k u_1ds.    
\end{eqnarray}
Now define the bounded linear operators $S$ and $K: H^1(D_0) \longmapsto  H^1(D_0)$ by the Riesz representation theorem such that 
\begin{eqnarray*}
&&(S v_1, v_2)_{ H^1(D_0  ) }= \int\limits_{D_0} (A_0-A) \grad \phi_1 \cdot \grad \overline{ \phi_2 }+\phi_1 \overline{ \phi_2} \, dx \\ 
&&\hspace*{3.5cm}+\int\limits_{B_R} A\grad u_1 \cdot  \grad \overline{u_2} + u_1 \overline{u_2}\, dx -\int\limits_{\partial B_R} \overline{u_2}  {\mathbb T}_k u_1\, ds \\  
&&\hspace*{-1cm}(-K v_1, v_2)_{ H^1(D_0  ) }=k^2 \int\limits_{D_0} (n-n_0)\phi_1 \overline{ \phi_2} \, dx +\int\limits_{D_0} \phi_1 \overline{ \phi_2} \, dx +\int\limits_{B_R}  (k^2n+1) u_1 \overline{u_2}\, dx. 
\end{eqnarray*}

By the definition of $T$ we have that $T=S+K$. By the compact embedding of $H^1(D_0)$ into $L^2(D_0)$ and $H^1(B_R)$ into $L^2(B_R)$ we have that $K$ is a compact operator which implies that $\Re(K)$ is also compact. We now show that $\Re(S)$ is self-adjoint and coercive on $H^1(D_0)$. Notice that since $A$ is a real symmetric matrix we have that
\begin{eqnarray*}
&&(\Re(S) v_1, v_2)_{ H^1(D_0  ) }=  \int\limits_{D_0} (\Re (A_0)-A) \grad \phi_1 \cdot \grad \overline{ \phi_2 }+\phi_1 \overline{ \phi_2} \, dx \\ 
&&\hspace*{3cm}+\int\limits_{B_R} A\grad u_1 \cdot  \grad \overline{u_2} + u_1 \overline{u_2}\, dx -\int\limits_{\partial B_R} \overline{u_2}  \Re({\mathbb T}_k) u_1\, ds   
\end{eqnarray*}
which gives that $\Re(S)$ is self-adjoint. To prove coercivity we write
\begin{eqnarray*}
&&(\Re(S) v_1, v_1)_{ H^1(D_0  ) }=  \int\limits_{D_0} (\Re (A_0)-A) |\grad(v_1+u_1)|^2+|(v_1+u_1)|^2 \, dx \\ 
&&\hspace*{3cm}+\int\limits_{B_R} A|\grad u_1|^2 + |u_1|^2\, dx -\int\limits_{\partial B_R} \overline{u_1}  \Re({\mathbb T}_k) u_1\, ds.   
\end{eqnarray*}
Using the fact that the real part of the Dirichlet to Neumann  $\Re({\mathbb T}_k)$ is non-positive (see e.g. \cite{dtnref} in ${\mathbb R}^3$) we obtain that 
$$(\Re(S) v_1, v_1)_{ H^1(D_0  ) }\geq \alpha \|v_1\|^2_{ H^1(D_0  )}$$ 
from a contradiction argument, namely by considering a sequence $v^{n}\in H^1(D_0)$ and the corresponding $u_n$ such that $\|v^{n}\|_{H^1(D_0)}=1$  for which $(\Re(S) v^{n}, v^{n})_{H^1(D_0)}\to 0$ we arrive at the  contradiction that $v^{n}\to 0$ in $H^1(D_0)$. This proves the claim when $\Re(A_0)-A$ is positive definite. \\

(ii) We now assume that $A-\Re(A_0)$ is positive definite. Unfortunately due to incompatible signs for $A-\Re(A_0)$ and the real part of the Dirichlet-to-Neumann operator we can not work with  (\ref{ttt}) for the operator $T$. To derive an appropriate expression for $T$,   we use \eqref{tk} and  letting  $\phi_j=v_j+u_j$, we arrive at
\begin{eqnarray*}
&&(T v_1, v_2)_{ H^1(D_0  ) }= - \int\limits_{D_0} (A-A_0) \grad \phi_1 \cdot \grad \overline{ v_2 }-k^2 (n-n_0)\phi_1 \overline{ v_2} \, dx\\
&&\hspace*{1.5cm}= - \int\limits_{D_0} (A-A_0) \grad v_1 \cdot \grad \overline{ v_2 }-k^2 (n-n_0) v_1 \overline{ v_2} \, dx \\
&&\hspace*{1.5cm} - \int\limits_{D_0} (A-A_0) \grad u_1 \cdot \grad \overline{ v_2 }-k^2 (n-n_0) u_1 \overline{ v_2} \, dx 
\end{eqnarray*}
Now recall that for a given $v_2 \in H^1(D_0)$ we have that $u_2 \in H^1_{loc}(\R^m)$ satisfies  
\begin{eqnarray*}
&&\grad \cdot A_0 \grad u_2 +k^2 n_0 u_2=\grad \cdot (A-A_0)  \grad v_2 +k^2(n-n_0)v_2 \quad   \textrm{ in } \,  \R^m. 
\end{eqnarray*}
Hence multiplying the above equation by $\overline{u_1}$ and integrating by parts over $B_R$ such $D\subset B_R$ we have that
\begin{eqnarray}
&&\hspace*{-2cm}-\int\limits_{B_R} A_0\grad u_2 \cdot  \grad \overline{u_1} -k^2 n_0 u_2 \overline{u_1}\, dx +\int\limits_{\partial B_R} \overline{u_1}  \frac{\partial u_2}{\partial \nu}\, ds \nonumber  \\
&&\hspace*{2cm} = - \int\limits_{D_0} (A-A_0) \grad v_2 \cdot \grad \overline{ u_1 } - k^2  \int\limits_{D_0} (n-n_0) v_2 \overline{ u_1 } \, dx. 
\end{eqnarray}
By taking the conjugate of the above expression and using the Dirichlet to Neumann operator ${\mathbb T}_k$ we have that 
\begin{eqnarray*}
&&(-T v_1, v_2)_{ H^1(D_0  ) }= \int\limits_{D_0} (A-A_0) \grad v_1 \cdot \grad \overline{ v_2 }-k^2 (n-n_0)v_1 \overline{ v_2} \, dx \\
&&\hspace*{2cm}+\int\limits_{B_R} A_0 \grad u_1 \cdot  \grad \overline{u_2} -k^2 n_0 u_1 \overline{u_2}\, dx -\int\limits_{\partial B_R}  u_1\overline{{\mathbb T}_k{u_2} } \, ds\\
&&\hspace*{2cm} - \int\limits_{D_0} (A_0-\overline{A_0}) \grad u_1 \cdot \grad \overline{ v_2 }-k^2 (n_0-\overline{n_0}) u_1 \overline{ v_2} \, dx.     
\end{eqnarray*}
In order to analyze $\Re(T)$ we first compute  $(T^*v_1,v_2)_{H^1(D_0)}$ and then  $1/2(T+T^*v_1,v_2)_{H^1(D_0)}$ to obtain
\begin{eqnarray*}
&&\hspace*{-2cm}(-\Re(T) v_1, v_2)_{ H^1(D_0  ) }= \int\limits_{D_0} (A-\Re(A_0)) \grad v_1 \cdot \grad \overline{ v_2 }-k^2 (n-\Re(n_0))v_1 \overline{ v_2} \, dx \\
&&\hspace*{-1.6cm}+\int\limits_{B_R} \Re(A_0) \grad u_1 \cdot  \grad \overline{u_2} -k^2\Re( n_0) u_1 \overline{u_2}\, dx -\int\limits_{\partial B_R}  u_1\overline{\Re({\mathbb T}_k){u_2} } \, ds\\
&&\hspace*{-1.6cm}- i\int\limits_{D_0} \Im(A_0)\grad u_1 \cdot \grad \overline{ v_2 }-k^2 \Im(n_0) u_1 \overline{ v_2} \, dx   + i\int\limits_{D_0} \Im(A_0)\grad v_1 \cdot \grad \overline{ u_2 }-k^2 \Im(n_0) v_1 \overline{ u_2} \, dx.   
\end{eqnarray*}
(Note that it is easy to see that the above expression is self-adjoint despite the appearance of the complex $i$ in front of complex-valued mixed terms.)

Now define the bounded linear operators $S$ and $K: H^1(D_0) \longmapsto  H^1(D_0)$ by the Riesz representation theorem such that 
\begin{eqnarray*}
&&\hspace*{-2cm}(S v_1, v_2)_{ H^1(D_0  ) }=  \int\limits_{D_0} (A-\Re(A_0)) \grad v_1 \cdot \grad \overline{ v_2 }+v_1\overline{v_2} \,dx +\int\limits_{B_R} \Re(A_0) \grad u_1 \cdot  \grad \overline{u_2}\,dx\\ 
&&\hspace*{-1cm}-\int\limits_{\partial B_R}  u_1\overline{\Re({\mathbb T}_k){u_2} } \, ds - i\int\limits_{D_0} \Im(A_0)\grad u_1 \cdot \grad \overline{ v_2 } \, dx   + i\int\limits_{D_0} \Im(A_0)\grad v_1 \cdot \grad \overline{ u_2 } \, dx \\  
\end{eqnarray*}
and $(K v_1, v_2)_{ H^1(D_0  ) }=(-\Re(T) v_1, v_2)_{ H^1(D_0  ) }-(S v_1, v_2)_{ H^1(D_0  ) }$. Note that in the definition of $K$ there are only $L^2$-terms, hence $K$ is a compact operator due to the compact embedding of $H^1(D_0)$ into $L^2(D_0)$ and $H^1(B_R)$ into $L^2(B_R)$. Now, using that $A-\Re(A_0)>0$ and $\Re(A_0)>0$ along with the fact that the real part of the Dirichlet to Neumann  $\Re({\mathbb T}_k)$ is non-positive (see e.g. \cite{dtnref} in ${\mathbb R}^3$)  and applying Young's inequality we have 
\begin{eqnarray*}
&&\hspace*{-2cm}(S v_1, v_1)_{ H^1(D_0  ) }\geq \left((A-\Re(A_0)-\alpha |\Im(A_0)|)\grad v_1,\grad v_1\right)_{L^2(D_0)} + (v_1,v_1)_{L^2(D_0)} \nonumber\\
&&\hspace*{-1cm} +  \left((\Re(A_0)-\frac{1}{\alpha} |\Im(A_0)|)\grad v_1,\grad v_1\right)_{L^2(D_0)} \geq C\|v_1\|_{H^1(D_0)}.
\end{eqnarray*}
Provided $\alpha$ is such that $(A-\Re(A_0)-\alpha |\Im(A_0)|)>0$ uniformly in $D$ and $(\Re(A_0)-\frac{1}{\alpha} |\Im(A_0)|)\geq 0$ which prove the second part of the theorem.
\end{proof}

Now we are ready to state the main theorem of the paper which characterizes the support of defective region $D_0$ in terms of the range of the operator $\tilde{F}_{\sharp}^{1/2}$, where we define 
$$\tilde{F}_{\sharp}:= \big| \Re \big(\gamma_m^{-1} \mathcal{S}_b^* F \big) \big| + \big|\Im \big( \gamma_m^{-1} \mathcal{S}_b^* F \big) \big|:  L^2(\mathbb{S}) \longmapsto  L^2(\mathbb{S}).$$
We assume that the coefficients $A$, $A_0$, $n$ and $n_0$ satisfy the assumptions stated in Section \ref{problem}. 
\begin{theorem}\label{fmtheorem}
Assume that $k$ is not a transmission eigenvalue for $D_0$ if $\Im(A_0)=0$ otherwise the assumptions of Theorem \ref{tinjective} hold.  Furthermore assume that either $\Re(A_0)-A>0$ uniformly in $D_0$, or  $A-A_0>0$ uniformly in $D_0$ or there is some constant $\alpha>0$ such that $A-\Re(A_0)-\alpha |\Im(A_0)|>0$ uniformly in $D_0$ and $\Re(A_0)-\frac{1}{\alpha} |\Im(A_0)|\geq 0$  in $D_0$. For any $z\in \R^m$ we define $\phi_z:= \mathcal{S}^*_b \mathbb{G}^{\infty}( \cdot ,z) \in L^2(\mathbb{S})$, then 
$$ z \in D_0 \quad \text{if and only if } \quad \phi_z \in \mathcal{R}(\tilde{F}_{\sharp}^{1/2}).$$
\end{theorem}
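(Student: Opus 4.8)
The plan is to deduce the theorem from the abstract range identities of Theorem \ref{fmthm1} and Theorem \ref{fmthm2}, applied to the factorization $\tilde F = -H^*TH$ from Section 3 with $\tilde F := \gamma_m^{-1}\mathcal{S}_b^*F$, and then to combine the resulting range characterization with Theorem \ref{rangeh}(ii). We work with the Gelfand triple $H^1(D_0)\subset L^2(D_0)\subset H^1(D_0)^*$ and $Y=L^2(\mathbb{S})$; since both $T$ and $H^*$ were defined via the Riesz representation theorem on $H^1(D_0)$, the Hilbert-space adjoint $H^*:H^1(D_0)\to L^2(\mathbb{S})$ of Lemma \ref{hadjoint} is exactly the abstract adjoint $H^*:X^*\to Y$ under the Riesz identification $X^*\cong H^1(D_0)$, so $\tilde F=H^*(-T)H$ has the structure required by the abstract theorems. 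The first verification is that $H$ is compact and injective, equivalently $H^*$ is compact with dense range, which is precisely Theorem \ref{rangeh}(i); this settles hypothesis (1) of both abstract theorems.

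Next I would verify the hypotheses on the middle operator, and here the key observation is that $\tilde F_\sharp=|\Re(\tilde F)|+|\Im(\tilde F)|$ is unchanged if $\tilde F$ is replaced by $-\tilde F$ or by $e^{it}\tilde F$ with $t\in\{0,\pi\}$, so we are free to apply the abstract theorems to $\pm\tilde F=H^*(\pm T)H$, choosing the sign of the middle operator so that the coercive-up-to-compact part has the correct sign, and we may invoke the remark following Theorem \ref{fmthm2} to absorb the sign of the imaginary part (recall $\Im(T)$ is non-positive by Theorem \ref{tkanalysis}(i)). If $\Re(A_0)-A>0$, Theorem \ref{rtkanalysis}(i) gives that $\Re(T)$ is a self-adjoint coercive operator plus a compact one, so we use $-\tilde F=H^*TH$ with middle operator $T$; if instead $A-\Re(A_0)-\alpha|\Im(A_0)|>0$ and $\Re(A_0)-\tfrac1\alpha|\Im(A_0)|\ge 0$ (a case that includes $A-A_0>0$ when $\Im(A_0)=0$), Theorem \ref{rtkanalysis}(ii) gives the same for $-\Re(T)$, so we use $\tilde F=H^*(-T)H$ with middle operator $-T$. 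In either case hypotheses (2) and (3) hold. For the last hypothesis I would split on $\Im(A_0)$: when $\Im(A_0)=0$, Theorem \ref{tkanalysis}(iii) shows $\Im$ of the middle operator is compact and Theorem \ref{tkanalysis}(ii), using that $k$ is not a transmission eigenvalue, shows it is strictly definite on $\overline{\mathcal R(H)}$, so Theorem \ref{fmthm1} applies; when $\Im(A_0)\ne 0$ the operator $\Im(T)$ is no longer compact, but the assumptions of Theorem \ref{tinjective} are in force and give that $T$ (hence $\pm T$) is injective, so Theorem \ref{fmthm2} applies. In every admissible configuration of $(A,A_0,n,n_0)$ all the hypotheses of one of the two abstract theorems are met, and the resulting indicator operator is exactly $|\Re(\gamma_m^{-1}\mathcal{S}_b^*F)|+|\Im(\gamma_m^{-1}\mathcal{S}_b^*F)|=\tilde F_\sharp$; hence $\mathcal R(H^*)=\mathcal R(\tilde F_\sharp^{1/2})$ as subspaces of $L^2(\mathbb{S})$.

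To conclude, I would invoke Theorem \ref{rangeh}(ii): for $\phi_z=\mathcal{S}_b^*\mathbb{G}^{\infty}(\cdot,z)$ one has $\phi_z\in\mathcal R(H^*)$ if and only if $z\in D_0$. Combining this with the range identity just established yields $z\in D_0$ if and only if $\phi_z\in\mathcal R(\tilde F_\sharp^{1/2})$, which is the claim. (Note that the hypothesis that $k$ is not a transmission eigenvalue, needed in Theorem \ref{rangeh}(ii) to solve the interior transmission problem, holds in all cases: by assumption when $\Im(A_0)=0$, and automatically when $\Im(A_0)\ne 0$ since then $\Im(A_0)<0$ or $\Im(n_0)>0$ precludes real transmission eigenvalues; likewise $A-\Re(A_0)$ is sign definite in each case, so the Fredholm alternative for \eqref{fmitp1}--\eqref{fmitp3} applies.)

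I expect the main obstacle to be not any single estimate — those are all packaged in Theorems \ref{rangeh}, \ref{tinjective}, \ref{tkanalysis}, \ref{rtkanalysis} — but the bookkeeping needed to match, in each of the admissible sign patterns of the constitutive parameters, the overall factor $-\gamma_m$ and the rotation $e^{it}$ with the correct abstract theorem, so that hypotheses (1)--(4) are simultaneously satisfied and the indicator function comes out as $|\Re(\gamma_m^{-1}\mathcal{S}_b^*F)|+|\Im(\gamma_m^{-1}\mathcal{S}_b^*F)|$ rather than some rotated or signed variant; the $e^{it}\tilde F$ / $-\tilde F$ invariance of $\tilde F_\sharp$ and the remark after Theorem \ref{fmthm2} are exactly the tools that make this reconciliation go through.
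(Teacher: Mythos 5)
Your proposal is correct and follows essentially the same route as the paper, whose proof is simply the one-line statement that the result follows by combining Theorems \ref{rangeh}, \ref{tinjective}, \ref{tkanalysis} and \ref{rtkanalysis} and applying Theorem 2.15 of \cite{kirschbook} when $\Im(A_0)=0$ or Theorem 2.1 of \cite{armin} otherwise. Your write-up merely makes explicit the bookkeeping (the sign of the middle operator, the invariance of $\tilde F_\sharp$ under $\tilde F\mapsto -\tilde F$, and the absence of real transmission eigenvalues in the absorbing case) that the paper leaves implicit.
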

\begin{proof}
Combining Theorems \ref{rangeh}, \ref{tinjective}, \ref{tkanalysis} and \ref{rtkanalysis}  the result follows by applying Theorem 2.15 in \cite{kirschbook} if $\Im(A_0)=0$ in $D_0$ or Theorem 2.1 in \cite{armin} if $\Im(A_0)<0$ in $D_0$  to the operator $\tilde{F}_{\sharp}$.
\end{proof}
Now let $(\lambda_i , \psi_i) \in \R^+ \times L^2(\mathbb{S})$ be an orthonormal eigensystem of $\tilde{F}_{\sharp}$ then by appealing to Picard's criterion (see e.g. Theorem 2.7 of \cite{p1}) we have the following characterization of the support of the defect $D_0$. 
\begin{corollary}\label{fmvoid}
Assume that $k$ is not a transmission eigenvalue of $D_0$ and $A, A_0,n,n_0$ satisfy the assumptions of Theorem  \ref{fmtheorem}. Then for $\phi_z:= \mathcal{S}^*_b \mathbb{G}^{\infty}( \cdot ,z)$
$$ z \in D_0 \quad \text{if and only if } \quad  \sum\limits_{i=1}^{\infty} \frac{|( \phi_z ,\psi_i)|^2}{ \lambda_i }< \infty.$$
\end{corollary}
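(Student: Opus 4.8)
The plan is to combine Theorem \ref{fmtheorem} with the spectral representation of $\tilde{F}_{\sharp}$ and Picard's criterion. First I would record the operator-theoretic properties of $\tilde{F}_{\sharp}$ that make the spectral machinery applicable. The relative far field operator $F=F_0-F_b$ is an integral operator on $L^2(\mathbb{S})$ with analytic, hence $L^2$, kernel, so it is compact; since $\mathcal{S}_b$ is bounded, $\gamma_m^{-1}\mathcal{S}_b^*F$ is compact as well, and therefore $\Re(\gamma_m^{-1}\mathcal{S}_b^*F)$, $\Im(\gamma_m^{-1}\mathcal{S}_b^*F)$ and their absolute values are compact and self-adjoint; consequently $\tilde{F}_{\sharp}=|\Re(\gamma_m^{-1}\mathcal{S}_b^*F)|+|\Im(\gamma_m^{-1}\mathcal{S}_b^*F)|$ is a compact self-adjoint operator on $L^2(\mathbb{S})$, and by the argument carried out in the proof of Theorem \ref{fmtheorem} (the application of Theorem 2.15 of \cite{kirschbook} or Theorem 2.1 of \cite{armin}) it is non-negative. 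Moreover, by the range identity established there, $\mathcal{R}(\tilde{F}_{\sharp}^{1/2})=\mathcal{R}(H^*)$, which by Theorem \ref{rangeh}(1) is dense in $L^2(\mathbb{S})$; since $\tilde{F}_{\sharp}$ is non-negative and self-adjoint this forces $\mathcal{N}(\tilde{F}_{\sharp})=\{0\}$. Hence $\tilde{F}_{\sharp}$ possesses an orthonormal eigensystem $(\lambda_i,\psi_i)\in\R^+\times L^2(\mathbb{S})$ with $\lambda_i>0$ whose eigenfunctions $\{\psi_i\}$ form a complete orthonormal basis of $L^2(\mathbb{S})$, and $\tilde{F}_{\sharp}^{1/2}$ is compact with singular system $(\sqrt{\lambda_i},\psi_i,\psi_i)$.

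With these facts in hand I would proceed as follows. By Theorem \ref{fmtheorem}, under the stated hypotheses $z\in D_0$ if and only if $\phi_z=\mathcal{S}_b^*\mathbb{G}^{\infty}(\cdot,z)\in\mathcal{R}(\tilde{F}_{\sharp}^{1/2})$, so it remains only to characterize membership in $\mathcal{R}(\tilde{F}_{\sharp}^{1/2})$ by the convergent series. Applying Picard's theorem (Theorem 2.7 of \cite{p1}) to the compact operator $\tilde{F}_{\sharp}^{1/2}$ with the singular system above, $\phi_z\in\mathcal{R}(\tilde{F}_{\sharp}^{1/2})$ if and only if $\phi_z$ is orthogonal to $\mathcal{N}\big((\tilde{F}_{\sharp}^{1/2})^*\big)$ and $\sum_i |(\phi_z,\psi_i)_{L^2(\mathbb{S})}|^2/(\sqrt{\lambda_i})^2<\infty$. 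Since $\tilde{F}_{\sharp}^{1/2}$ is self-adjoint and $\mathcal{N}(\tilde{F}_{\sharp}^{1/2})=\mathcal{N}(\tilde{F}_{\sharp})=\{0\}$ by the previous paragraph, the orthogonality condition is vacuous, and the series condition reduces exactly to
\[ \sum_{i=1}^{\infty}\frac{\big|(\phi_z,\psi_i)_{L^2(\mathbb{S})}\big|^2}{\lambda_i}<\infty. \]
Combining this equivalence with Theorem \ref{fmtheorem} yields the claimed characterization of $D_0$.

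The bulk of the argument is bookkeeping: once the factorization-method theorem is in place, the corollary is a textbook application of Picard's criterion. The only point that requires care is the treatment of the null space of $\tilde{F}_{\sharp}^{1/2}$ that appears in Picard's criterion, which is why I would first establish, via the density of $\mathcal{R}(H^*)$ together with the range identity $\mathcal{R}(\tilde{F}_{\sharp}^{1/2})=\mathcal{R}(H^*)$, that $\tilde{F}_{\sharp}$ is injective, so that this null space is trivial and the orthogonality hypothesis drops out.
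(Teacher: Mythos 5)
Your proposal is correct and follows exactly the route the paper intends: the paper offers no detailed proof of this corollary beyond introducing the orthonormal eigensystem $(\lambda_i,\psi_i)$ of $\tilde{F}_{\sharp}$ and invoking Picard's criterion together with Theorem \ref{fmtheorem}. Your elaboration of the supporting facts (compactness of $\tilde{F}_{\sharp}$, and its injectivity via the density of $\mathcal{R}(H^*)$ and the range identity, so that the orthogonality condition in Picard's criterion is vacuous) supplies precisely the bookkeeping the paper leaves implicit.
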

\begin{remark}
{\em Alternatively the discussed analytical framework can be used to characterize the support of $D_0$ via the Generalized Linear Sampling Method developed in \cite{GLSM} which connects the support of $D_0$ to the solution of a minimization problem. 
}
\end{remark}
\section{Numerical Examples }
In this section we show  numerical examples  in ${\mathbb R}^2$, where a defective region is reconstructed from simulated  far-field data. To simulate the data,  we solve the direct scattering problems using a cubic finite element method with a perfectly matched layer and from this we will evaluate approximated $u^{\infty}_0$ and $u^{\infty}_b$. In the following calculations we use $N$ different incident and observation directions $d_j=\hat{x}_j=\big(\cos(\theta_j), \sin(\theta_j) \big)$ where $\theta_j$ are uniformly spaced points in $[0,2\pi)$. This leads to discretized far field operators ${\bf F_0}=\big[ u^{\infty}_0(\hat{x}_i,d_j) \big]_{i,j=1}^{N}$, ${\bf F_b}=\big[ u^{\infty}_b(\hat{x}_i,d_j) \big]_{i,j=1}^{N}$, and ${\bf F=F_0-F_b}$ where we can apply the Picard's criterion in Corollary \ref{fmvoid}. Even though the scattering operator $\mathcal{S}_b$ is unitary, due to approximation error in the discretized operator ${\bf S}_b$ we use ${\bf S}^{-1}_b$, instead of its adjoint ${\bf S}_b^{*}$  in order to minimize the error (in all our examples  we observe that $\| {\bf S}^*_b {\bf S}_b - I \| / \| {\bf S}^*_b \|^2 \approx 1.0014$).  Hence  we let ${\bf \tilde{F}}_{\sharp}=\big| \Re \big( \gamma_m^{-1} {\bf S}^{-1}_b {\bf F} \big) \big| + \big|\Im \big( \gamma_m^{-1} {\bf S}^{-1}_b {\bf F}  \big) \big|$ in the calculations along with $\phi_z=[{\bf S}^{-1}_b \mathbb{G}^{\infty}(\hat{x}_j,z)]_{j=1}^{N}$ where ${\bf S}^{-1}_b$ is computed by a LU decomposition.
The application of the factorization method requires the  computation of the far field pattern $\mathbb{G}^{\infty}(\hat{x},z)$ of the background Green's function $\mathbb{G}(\hat{x},z)$. In order to avoid dealing with singularity at the point $z$, for the case of piecewise homogeneous isotropic background in Theorem 2.1 of \cite{fmconstant} the authors provide a relation between the far field pattern of the background Green's function and the total field  due to the background media extending the mixed reciprocity relation known for homogeneous background \cite{coltonkress}. We use this relation in our examples for piecewise homogeneous background.  In the case of anisotropic media in $D$ we provide a partial result  of mixed reciprocity relation for $z\in D$ (for problems in nondestructive testing when $D$ is known, it is reasonable to consider the sampling points $z$ inside $D$). We show here the proof in ${\mathbb R}^2$. To this end let us first assume that  $A\neq I$ is constant  matrix and $n\neq 1$ is  constant in $D$.  The fundamental solution of the differential operator  $Lu:=\grad \cdot A\grad u +k^2nu$ in  $\R^2$  is given by
$$\Phi_b(x,y)=\frac{i}{4 \sqrt{\det A}} H^{(1)}_0(k \sqrt{n} |x-y|_A)$$
where $|x-y|^2_A=(x-y )^{\top}A^{-1}(x-y )$. (There is a similar definition for the fundamental solutions in $\R^3$.) 
\begin{theorem} \label{mixedrec}
Assume that $A$ is a constant positive definite matrix and $n>0$ constant. Then for $\hat{x} \in \mathbb{S}$ and $z \in D$ we have that 
$$\mathbb{G}^{\infty}(\hat{x},z)={\gamma_m} u_b(z,-\hat{x})$$
with $u_b(z,-\hat{x})$ is that solution of \eqref{bgp1}-\eqref{bgp3}.
\end{theorem}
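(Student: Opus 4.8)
\textit{Proof proposal.} The plan is to establish the identity by the classical mixed--reciprocity argument (cf.\ the mixed reciprocity relation for a homogeneous background in \cite{coltonkress}), adapted to the penetrable anisotropic background: one applies Green's second identity for the operator $Lu:=\grad\cdot A\grad u+k^2nu$ to the pair $\mathbb{G}(\cdot,z)$ and $u_b(\cdot,-\hat x)$. First I would fix $R$ so large that $\overline D\subset B_R$ and $z\in D\subset B_R$, excise a small ball $B_\eps(z)\subset D$ around the point $z$, and write Green's second identity for $L$ over the annular domain $B_R\setminus\overline{B_\eps(z)}$. On this domain both $\mathbb{G}(\cdot,z)$ and $u_b(\cdot,-\hat x)$ solve $Lw=0$ in the ordinary sense, and both satisfy the transmission conditions $w^-=w^+$, $\partial w^-/\partial\nu_A=\partial w^+/\partial\nu$ on $\partial D$; hence the interface $\partial D$ contributes nothing and the volume term vanishes, leaving
\begin{equation*}
\int_{\partial B_R}\!\Big(u_b(y,-\hat x)\,\frac{\partial \mathbb{G}(y,z)}{\partial\nu_y}-\mathbb{G}(y,z)\,\frac{\partial u_b(y,-\hat x)}{\partial\nu_y}\Big)ds_y=\int_{\partial B_\eps(z)}\!\Big(u_b(y,-\hat x)\,\frac{\partial \mathbb{G}(y,z)}{\partial\nu_{A}}-\mathbb{G}(y,z)\,\frac{\partial u_b(y,-\hat x)}{\partial\nu_{A}}\Big)ds_y,
\end{equation*}
with outward unit normals (and $\nu_A=\nu$ on $\partial B_R$, where $A=I$).

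I would then let $\eps\to 0$ in the right-hand side. Since $z\in D$ and $A\equiv\tilde A$, $n\equiv\tilde n$ are constant near $z$, the singularity of $\mathbb{G}(\cdot,z)$ at $z$ is exactly that of the constant-coefficient fundamental solution $\Phi_b(\cdot,z)$: writing $\mathbb{G}(\cdot,z)=\Phi_b(\cdot,z)+v_z$ where $Lv_z=0$ near $z$ (so $v_z$ is $C^1$ there by elliptic regularity), the $v_z$-part of the small-sphere integral tends to $0$, while the $\Phi_b$-part tends to $-u_b(z,-\hat x)$ by the reproducing property of the fundamental solution of $L$ (obtained from the usual jump computation for $\int_{\partial B_\eps(z)}(w\,\partial_{\nu_A}\Phi_b-\Phi_b\,\partial_{\nu_A}w)$, or by reducing to the classical Helmholtz case through the linear change of variables that diagonalises $A$). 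Because the left-hand side does not depend on $\eps$, this gives
\begin{equation*}
\int_{\partial B_R}\Big(u_b(y,-\hat x)\,\frac{\partial \mathbb{G}(y,z)}{\partial\nu_y}-\mathbb{G}(y,z)\,\frac{\partial u_b(y,-\hat x)}{\partial\nu_y}\Big)ds_y=-\,u_b(z,-\hat x).
\end{equation*}

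Finally I would substitute $u_b(y,-\hat x)=e^{-iky\cdot\hat x}+u_b^s(y,-\hat x)$ into this relation. For the plane-wave part, the representation \eqref{ffpdef} applied to the radiating Helmholtz solution $\mathbb{G}(\cdot,z)$ over $\partial B_R$ reads $\mathbb{G}^\infty(\hat x,z)=\gamma_m\int_{\partial B_R}\big(\mathbb{G}(y,z)\,\partial_{\nu_y}e^{-ik\hat x\cdot y}-\partial_{\nu_y}\mathbb{G}(y,z)\,e^{-ik\hat x\cdot y}\big)ds_y$, so this contribution equals $-\gamma_m^{-1}\mathbb{G}^\infty(\hat x,z)$. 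For the scattered part, both $u_b^s(\cdot,-\hat x)$ and $\mathbb{G}(\cdot,z)$ are radiating solutions of the Helmholtz equation outside $B_R$, and for any two such solutions $w_1,w_2$ the conjugate-free boundary integral $\int_{\partial B_R}(w_1\,\partial_\nu w_2-w_2\,\partial_\nu w_1)\,ds$ vanishes: it is independent of $R$ by Green's identity in the exterior, and its leading-order terms cancel as $R\to\infty$. Hence $-\gamma_m^{-1}\mathbb{G}^\infty(\hat x,z)+0=-u_b(z,-\hat x)$, i.e.\ $\mathbb{G}^\infty(\hat x,z)=\gamma_m\,u_b(z,-\hat x)$. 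The only steps that need genuine care are the singularity analysis at $z$ (handled by the change of variables reducing $\Phi_b$ to the standard Helmholtz fundamental solution) and the vanishing of the radiating--radiating boundary integral; everything else is bookkeeping with signs and outward normals.
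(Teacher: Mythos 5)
Your proof is correct and is essentially the paper's own argument: both are the mixed-reciprocity computation built on Green's second identity for $L$, the local decomposition $\mathbb{G}(\cdot,z)=\Phi_b(\cdot,z)+(\text{smooth})$ near $z$, the splitting $u_b=e^{-ik\hat x\cdot y}+u_b^s$, the vanishing of the conjugate-free boundary form for two radiating Helmholtz solutions, and the far-field representation \eqref{ffpdef}. The only difference is bookkeeping: you run Green's identity over the punctured domain $B_R\setminus\overline{B_\eps(z)}$ and extract $-u_b(z,-\hat x)$ in the small-ball limit, whereas the paper performs the same computation on $\partial D$ via the Green representation of $u_b$ in $D$ with $\Phi_b$ and the transmission conditions across $\partial D$.
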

\begin{proof}
Assume that $z \in D$ therefore we have that $\mathbb{G}(y,z)$ is a smooth radiating solution to Helmholtz equation in $\R^m \setminus \overline{D}$. So by \eqref{ffpdef} we have that 
$$\mathbb{G}^{\infty}(\hat{x},z)= \gamma_m \int\limits_{\partial D}\left( \mathbb{G}(y,z)^+ \frac{\partial }{\partial \nu_y} e^{-ik \hat{x}\cdot y} - \frac{\partial }{\partial \nu_y} \mathbb{G}(y,z)^+ \, e^{-ik \hat{x}\cdot y} \right)ds_y. $$
Now from Green's second identity we have that for $z\in D$
\begin{eqnarray}
u_b(z,-\hat{x})=\int\limits_{\partial D}  \frac{\partial }{\partial \nu_{A_y}} u^-_b(y,-\hat{x}) \, \Phi_b(y,z) - u^-_b(y,-\hat{x}) \frac{\partial }{\partial \nu_{A_y}} \Phi_b(y,z)  \, ds_y. \label{grep1}
\end{eqnarray}
Noting that the difference $\mathbb{G}(y,z) - \Phi_b(y,z)$ is a smooth solution of \eqref{bgp1} in $D$ and using again Green's second identity implies that 
$$0=\int\limits_{\partial D}  \frac{\partial }{\partial \nu_{A_y}} u^-_b(y,-\hat{x}) \, \big[\mathbb{G}(y,z)^-  - \Phi_b(y,z) \big] - u^-_b(y,-\hat{x}) \frac{\partial }{\partial \nu_{A_y}} \big[\mathbb{G}(y,z)^-  -  \Phi_b(y,z) \big]   \, ds_y. $$
By adding this identity with \eqref{grep1} gives that 
\begin{eqnarray}
\hspace*{-1cm}u_b(z,-\hat{x})&=&\int\limits_{\partial D}  \left(\frac{\partial }{\partial \nu_{A_y}} u^-_b(y,-\hat{x}) \, \mathbb{G}(y,z)^- - u^-_b(y,-\hat{x}) \frac{\partial }{\partial \nu_{A_y}} \mathbb{G}(y,z)^-\right) ds_y \nonumber  \\
&=& \int\limits_{\partial D}\left(  \frac{\partial }{\partial \nu_{y}} u^+_b(y,-\hat{x}) \, \mathbb{G}(y,z)^+ - u^+_b(y,-\hat{x}) \frac{\partial }{\partial \nu_{y}} \mathbb{G}(y,z)^+ \right)ds_y\label{grep2}
\end{eqnarray}
where the second equality is due to the continuity conditions  of the Cauchy data across  $\partial D$. Now since $u_b(z,-\hat{x})=u^s_b(z,-\hat{x}) + e^{-ik \hat{x}\cdot z}$ with $u^s_b(z,-\hat{x})$ being a radiating solution to Helmholtz equation in $\R^m \setminus \overline{D}$, once more an application of Green's second identity yields that
$$ 0= \int\limits_{\partial D}\left(  \frac{\partial }{\partial \nu_{y}} u^s_b(y,-\hat{x}) \, \mathbb{G}(y,z)^+ - u^s_b(y,-\hat{x}) \frac{\partial }{\partial \nu_{y}} \mathbb{G}(y,z)^+ \right) ds_y $$
Therefore we have that 
$$u_b(z,-\hat{x}) =  \int\limits_{\partial D}\left( \mathbb{G}(y,z)^+ \frac{\partial }{\partial \nu_y} e^{-ik \hat{x}\cdot y} - \frac{\partial }{\partial \nu_y} \mathbb{G}(y,z)^+ \, e^{-ik \hat{x}\cdot y} \right) ds_y.$$
which proves the result.
\end{proof}
\begin{remark}
{\em The proof of Theorem \ref{mixedrec} holds true for non-constant media in $\R^2$ or $\R^3$ as long as one can define the corresponding fundamental solution $\Phi_b(\cdot,\cdot)$ of the operator $Lu:=\grad \cdot A\grad u +k^2nu$ (see e.g. \cite{miranda}). }
\end{remark}
The above result gives that the $\mathbb{G}^{\infty}(\hat{x},z)$ can be approximated using the same cubic finite element method with a perfectly matched layer that is used to compute the scattered field $u^s_b$. In particular this way we compute $\mathbb{G}^{\infty}(\hat{x},z_p)$ at the sampling points $z_p$ being the mesh points in the finite element mesh. The defective region $D_0$ is visualized by plotting the indicator function $$ \mathcal{X}_{FM}(z)=\left[ \sum\limits_{i=1}^{N} \frac{| ( \phi_z ,{\bf \psi}_i)_{\ell^2}|^2}{ {\bf \lambda}_i } \right]^{-1} \qquad \qquad z\in D$$
where $({\bf \lambda}_i , {\bf \psi}_i) \in \R^+ \times \C^{N}$ is the eigensystem for the discretized operator ${\bf \tilde{F}}_{\sharp}$ defined by the discretized far field operators and scattering operator.

\medskip

\noindent {\bf Example 1.} We consider $D=[-2,2]^2$ where the defective region is a void $D_0$ (i.e $ A_0=I$ and $n_0=1$ in $D_0$) embedded in isotropic media. The coefficients in $D$ are given by $ A=0.5 I$ and $ n=3$.  We consider four examples of the void region $D_0$, namely the ball centered at the origin with radius $R=1$, the square $D_0=[-1,1]^2$, the ellipse centered at $(0.5,1)$ with axis $a=0.5$ and $b=0.3$, and two  circular voids with radius 0.3 centered at $(-1,1)$ and $(1,-1)$, respectively. Reconstructions are shown in Figure \ref{fmpix1} and Figure \ref{fmpix2}. 
In all our examples, we  use  $N=32$, i.e. $32$ incident directions and observation directions. 
\begin{figure}[H]
\centering
\includegraphics[scale=0.4]{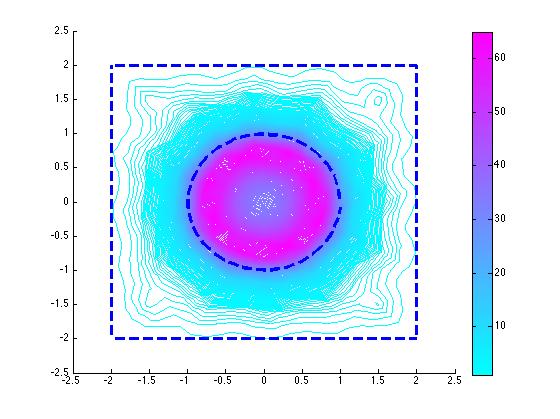}\includegraphics[scale=0.4]{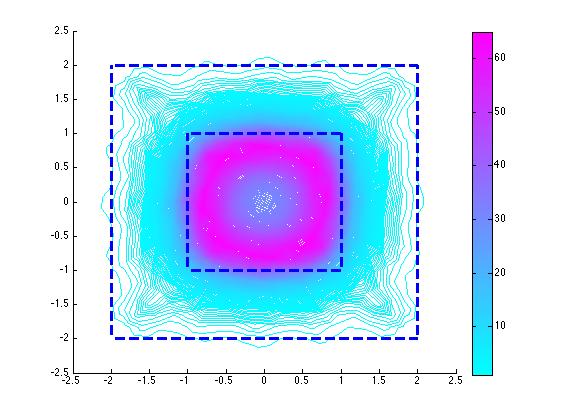}\\
\caption{On the left is the reconstruction of the circular void and on the right the square void. The defective region is a void so the coefficients are given by $A_0= I$ and  $n_0=1$ in $D_0$ for wavenumber $k=1$. Dashed line: exact boundaries of the scatterer $D$ and void(s) $D_0$. No added noise.}
\label{fmpix1}
\end{figure}
\begin{figure}[H]
\centering
\includegraphics[scale=0.4]{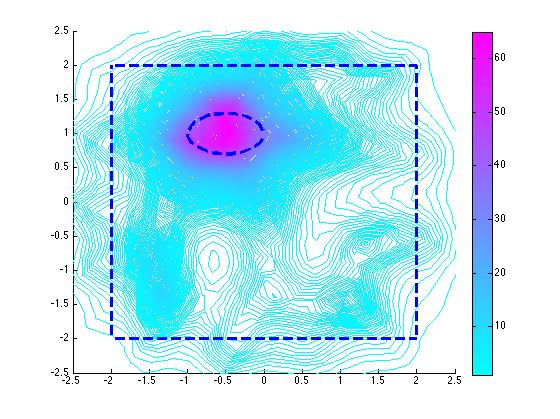}\includegraphics[scale=0.4]{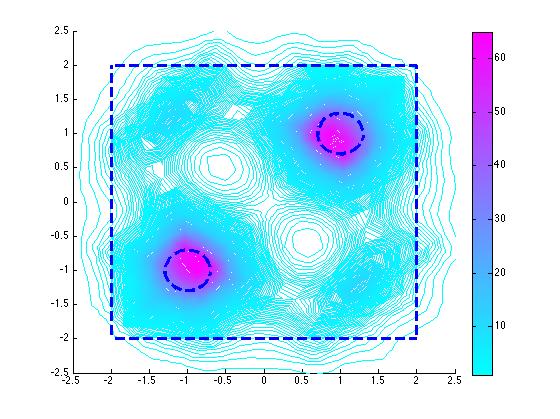}\\
\caption{ Reconstruction of the ellipse void  on the left  and of  the 2 circular voids on the right using the factorization method. The wavenumber in both examples is $k=1$. Dashed line: exact boundaries of the scatterer $D$ and void(s) $D_0$. 2\% added noise.}
\label{fmpix2}
\end{figure}

\noindent {\bf Example 2.} For this example we now reconstruct a circular void of radius 1 centered at the origin and two small circular voids in an anisotropic square scatterer $D=[-2,2]^2$. As in the previous example the  two circular voids both have radius 0.3 and they are centered at $(-1,1)$ and $(1,-1)$ respectively. The coefficients in $D$ are chosen to be given by 
$$A=\begin{pmatrix}  0.6022 & 0.1591 \\ 0.1591  &  0.7478  \end{pmatrix}$$ 
and $ n=3$ with $N=64$. The reconstructions  are presented in Figure \ref{fmpix4}.
\begin{figure}[H]
\centering
\includegraphics[scale=0.4]{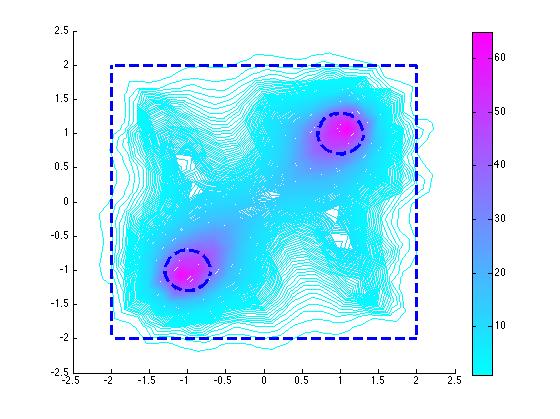}\includegraphics[scale=0.4]{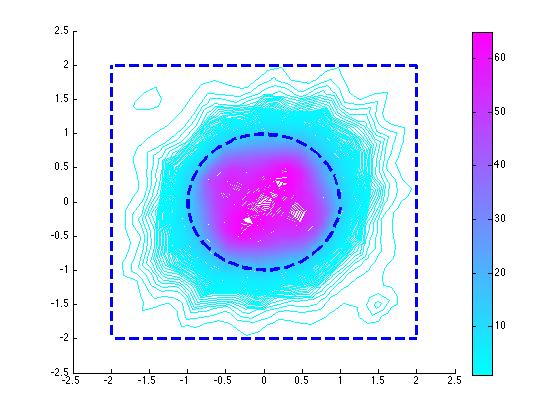}\\
\caption{ On the left is the reconstruction of the 2 circular.  While on the right is the reconstruction of the a circular void of radius 1. Where the wavenumber is $k=1$. Dashed line: exact boundaries of the scatterer and void(s). No added noise.}
\label{fmpix4}
\end{figure}
\noindent {\bf Example 3.} For our next example we now consider  anisotropic defects embedded  in anisotropic material.  In particular,  we reconstruct the two small circular defects and the ellipse inside the square $D=[-2,2]^2$. The coefficients are chosen in $D$ and $D_0$ to be given respectively by 
$$A=\begin{pmatrix}  0.6022 & 0.1591 \\  0.1591  &  0.7478 \end{pmatrix} \quad \text{ and } \quad  A_0=\begin{pmatrix}  0.1673 & -0.0308 \\  -0.0308  &  0.2030 \end{pmatrix} $$ 
and $ n= n_0=3$ with $N=64$ in both cases. The reconstructions are shown in Figure \ref{fmpix4}. 
\begin{figure}[H]
\centering
\includegraphics[scale=0.4]{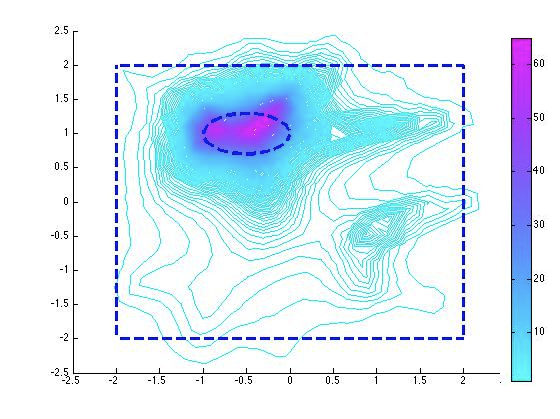}\includegraphics[scale=0.4]{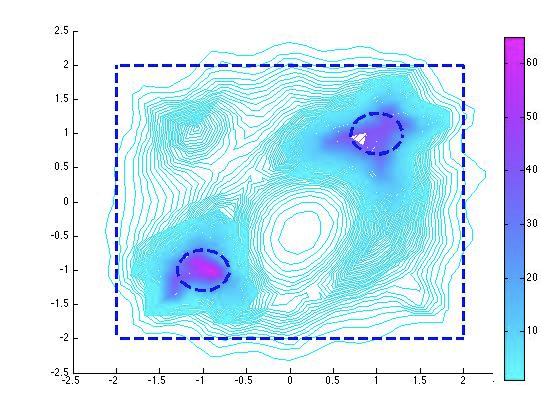}\\
\caption{  On the left is the reconstruction of the ellipse, while on the right is the reconstruction of the two discs. The wavenumber is $k=1$. Dashed line: exact boundaries of the scatterer $D$ and defect(s) $D_0$. 4\% added noise.}
\label{fmpix4}
\end{figure}

\section*{\normalsize Acknowledgments}
The research of F.~C. is supported in part by the  Air Force Office of Scientific Research  Grant FA9550-13-1-0199. The research of I.~H. is supported by the University of Delaware Graduate Fellowship. The authors would also like to thank Peter Monk for providing the code to solve the direct scattering problems and computing the far field operators.

\end{document}